\documentclass[11pt]{article}
\usepackage{amsfonts}
\usepackage{mathrsfs}
\usepackage[dvips]{graphics}
\usepackage[dvips]{color}
\usepackage{amsthm}
\usepackage[]{amsmath}
\usepackage{CJK}
\usepackage{indentfirst}
\setlength{\arraycolsep}{0.2mm}
\setlength{\textwidth}{170mm}
\setlength{\hoffset}{-20mm}
\setlength{\textheight}{250mm}
\setlength{\voffset}{-25mm}
\newtheorem{proposition}{Proposition}[section]
\newtheorem{lemma}[proposition]{Lemma}

\newtheorem{theorem}[proposition]{Theorem}
\newtheorem{corollary}[proposition]{Corollary}
\newtheorem{property}[proposition]{Property}

\begin{document}
\begin{CJK*}{GBK}{song}
\CJKindent

\centerline{\textbf{\LARGE{The numbers of repeated palindromes}}}

\vspace{0.2cm}

\centerline{\textbf{\LARGE{in the Fibonacci and Tribonacci sequences}}}

\vspace{0.2cm}

\centerline{Huang Yuke\footnote[1]{School of Mathematics and Systems Science, Beihang University (BUAA), Beijing, 100191, P. R. China. E-mail address: huangyuke07@tsinghua.org.cn,~hyg03ster@163.com (Corresponding author).}
~~Wen Zhiying\footnote[2]{Department of Mathematical Sciences, Tsinghua University, Beijing, 100084, P. R. China. E-mail address: wenzy@tsinghua.edu.cn.}}

\vspace{1cm}

\centerline{\textbf{\large{ABSTRACT}}}

\vspace{0.2cm}

The Fibonacci sequence $\mathbb{F}$ is the fixed point beginning with $a$ of morphism $\sigma(a,b)=(ab,a)$.
Since $\mathbb{F}$ is uniformly recurrent, each factor $\omega$ appears infinite many times in the sequence which is arranged as $\omega_p$ $(p\ge 1)$. Here we distinguish $\omega_p\neq\omega_q$ if $p\neq q$. In this paper, we give algorithm for counting the number of repeated palindromes in $\mathbb{F}[1,n]$ (the prefix of $\mathbb{F}$ of length $n$). That is the number of the pairs $(\omega, p)$, where $\omega$ is a palindrome and $\omega_p\prec\mathbb{F}[1,n]$.
We also get explicit expressions for some special $n$ such as $n=f_m$ (the $m$-th Fibonacci number).
The similar results are also given to the Tribonacci sequence, the fixed point beginning with $a$ of morphism $\tau(a,b,c)=(ab,ac,a)$.

\vspace{0.2cm}

\noindent\textbf{Key words}~~~~the Fibonacci sequence; the Tribonacci sequence; palindrome; algorithm; gap sequence.

\vspace{0.2cm}

\noindent\textbf{2010 MR Subject Classification}~~~~11B85; 68Q45

\section{Introduction}

A palindrome is a finite word that reads the same backwards as forwards.
Let $\mathcal{P}_F$ (resp. $\mathcal{P}_T$) be all palindromes occurring in $\mathbb{F}$ (resp. $\mathbb{T}$).
The research on the number of distinct palindromes is called ``rich word".
A finite word $\omega$ is rich if and only if $\omega$ contains exactly $|\omega|+1$ distinct palindromes (including the empty word), where $|\omega|$ is the length of $\omega$. A sequence is rich if and only if all of its factors are rich.
X.Droubay, J.Justin and G.Pirillo\cite{DJP2001} proved that episturmian sequences are rich. Therefore, as special cases, $\mathbb{F}$ and $\mathbb{T}$ are rich. And so the number of distinct palindromes in
$\mathbb{F}[1,n]$ (resp. $\mathbb{T}[1,n]$, the prefix of $\mathbb{T}$ of length $n$) is $n+1$ for all $n$.

In this paper, we consider the numbers of repeated palindromes in $\mathbb{F}[1,n]$ and $\mathbb{T}[1,n]$. Denote
$$A(n)=\sharp\{(\omega,p):\omega\in\mathcal{P}_F,\omega_p\prec\mathbb{F}[1,n]\}
\text{ and }B(n)=\sharp\{(\omega,p):\omega\in\mathcal{P}_T,\omega_p\prec\mathbb{T}[1,n]\}.$$
The research on counting the repeated palindromes is not rich.
From our knowledge, it seems the first time to study this problem.
In related fields, the numbers of repeated squares and cubes in $\mathbb{F}[1,n]$ and $\mathbb{T}[1,n]$
are given for some special cases.
Let $f_m$ be the $m$-th Fibonacci number.
$f_{-2}=0$, $f_{-1}=1$, $f_{m}=f_{m-1}+f_{m-2}$ for $m\geq 0$.
Let $t_m$ be the $m$-th Tribonacci number.
$t_{-2}=0$, $t_{-1}=t_{0}=1$, $t_m=t_{m-1}+t_{m-2}+t_{m-3}$ for $m\geq1$.
A.S.Fraenkel and J.Simpson\cite{FS2014} gave the number of repeated squares in $\mathbb{F}[1,f_m]$.
In 2014,
C.F.Du, H.Mousavi, L.Schaeffer, J.Shallit\cite{DMSS2014} and H.Mousavi, J.Shallit\cite{MS2014} gave the numbers of repeated squares and cubes in $\mathbb{F}[1,f_m]$ and $\mathbb{T}[1,t_m]$ by mechanical methods.

The main difficulty of this problem is twofold:
(1) The positions of all occurrences for all palindromes are not easy to be determined.
In this paper, we overcome this difficulty by using the ``gap sequence" properties of $\mathbb{F}$ and $\mathbb{T}$, which we introduced and studied in \cite{HW2015-1,HW2015-2}.
(2) Taking $\mathbb{F}$ for instance, by the gap sequence property of $\mathbb{F}$, we can find out all distinct palindromes in $\mathbb{F}[1,n]$. We can also count the number of occurrences of each palindrome. So the summation of these numbers are the numbers of repeated palindromes in $\mathbb{F}[1,n]$. But this method is complicated.
We overcome this difficulty by studying the relations among positions of each $\omega_p$, and establishing the recursive structure of $\mathcal{P}_F$ in Section 3. The similar structure of $\mathcal{P}_T$ is established in Section 6.

Using the gap sequence properties and recursive structures, we give algorithms for counting $A(n)$ and $B(n)$ in Section 4 and 7, respectively.
We also get explicit expressions for some special $n$, such as:
for $m\geq0$, $A(f_m)=\tfrac{m-3}{5}f_{m+2}+\tfrac{m-1}{5}f_{m}+m+3$ and
$$B(t_m)=\tfrac{m}{22}\big(10t_{m}+5t_{m-1}+3t_{m-2}\big)
+\tfrac{1}{22}\big(-23t_{m}+12t_{m-1}-5t_{m-2}\big)+m+\tfrac{3}{2}.$$

We think this method for counting the repeated palindromes is fit for $m$-bonacci sequence,
and even fit for sturmian sequence, episturmian sequence etc.
But now we only have the gap sequence properties of $\mathbb{F}$ and $\mathbb{T}$.
As a final remark, we establish the cylinder structures and chain structures of $\mathcal{P}_F$ and $\mathcal{P}_T$ in Section 8. Using them, we prove some known results.

\vspace{0.2cm}

Let $\rho=x_1\cdots x_n$ be a finite word (or $\rho=x_1x_2\cdots$ be a sequence).
For any $i\leq j\leq n$, we define $\rho[i,j]=x_ix_{i+1}\cdots x_{j-1}x_j$.
For convenience, we denote $\rho[i]=\rho[i,i]=x_i$, $\rho[i,i-1]=\varepsilon$ (empty word).

We say that $\nu$ is a prefix (resp. suffix) of a word $\omega$ if there exists word $u$ such that $\omega=\nu u$ (resp. $\omega=u\nu$), $|u|\geq0$, which denoted by $\nu\triangleleft\omega$ (resp. $\nu\triangleright\omega$).
In this case, we write $\nu^{-1}\omega=u$ (resp. $\omega\nu^{-1}=u$), where $\nu^{-1}$ is the inverse word of $\nu$ such that $\nu\nu^{-1}=\nu^{-1}\nu=\varepsilon$.

Let $\omega$ be a factor of $\mathbb{F}$, denoted by $\omega\prec\mathbb{F}$. Since $\mathbb{F}$ is uniformly recurrent, i.e., each factor $\omega$ occurs infinitely often and with bounded gaps between consecutive occurrences\cite{AS2003}.
We arrange them in the sequence $\{\omega_p\}_{p\geq1}$, where
$\omega_p$ denote the $p$-th occurrence of $\omega$.
We denote by $L(\omega,p)$ and $P(\omega,p)$ the positions of the first and last letters of $\omega_p$, respectively. Then $P(\omega,p)=L(\omega,p)+|\omega|-1$.
We denote the gap between $\omega_p$ and $\omega_{p+1}$ by $G_p(\omega)$.
Specifically, for $p\geq1$, let $\omega_p=x_{i+1}\cdots x_{i+n}$ and $\omega_{p+1}=x_{j+1}\cdots x_{j+n}$.
Then when $i+n<j$,
$G_p(\omega)=x_{i+n+1}\cdots x_{j}$;
when $i+n=j$, $G_p(\omega)=\varepsilon$; when $i+n>j$, $G_p(\omega)=(x_{j+1}\cdots x_{i+n})^{-1}$.
The sequence $\{G_p(\omega)\}_{p\geq1}$ is called the gap sequence of factor $\omega$.

\section{Preliminaries of the Fibonacci sequence}

We define $F_m=\sigma^m(a)$ for $m\geq0$, $F_{-2}=\varepsilon$, $F_{-1}=b$.
Then $F_0=a$, $F_m=F_{m-1}F_{m-2}$ for $m\geq1$, and $|F_m|=f_m$ for $m\geq-2$.
Denote by $\delta_m$ the last letter of $F_m$ for $m\geq-1$, then $\delta_m=a$ if and only if $m$ is even.
The $m$-th kernel word of $\mathbb{F}$ is defined as $K_m=\delta_{m+1}F_m\delta_m^{-1}$, which is also called singular word.
It is known that all kernel words are palindromes and $K_m=K_{m-2}K_{m-3}K_{m-2}$ for all $m\geq2$, see \cite{WW1994}.
Let $Ker(\omega)$ be the maximal kernel word occurring in factor $\omega$. Then by Theorem 1.9 in \cite{HW2015-1}, $Ker(\omega)$ occurs in $\omega$ only once.
Moreover

\begin{property}[Theorem 2.8 in \cite{HW2015-1}]\label{wpf}
$Ker(\omega_p)=Ker(\omega)_p$ for all $\omega\in\mathbb{F}$ and $p\geq1.$
\end{property}
This means, let $Ker(\omega)=K_m$, then the maximal kernel word occurring in $\omega_p$ is just $K_{m,p}$. For instance, $Ker(aba)=b$, $(aba)_3=\mathbb{F}[6,8]$, $(b)_3=\mathbb{F}[7]$, so $Ker((aba)_3)=(b)_3$, $(aba)_3=a(b)_3a$.

In \cite{HW2015-1}, we use an equivalent notion of ``gap'' called ``return word'', which is introduced by F.Durand\cite{D1998}. Let $r_p(\omega)$ be the $p$-th return word of $\omega$, then $r_p(\omega)=\omega G_p(\omega)$. Using this relation, we rewrite Theorem 2.11 in \cite{HW2015-1} as below.

\begin{property}[]\label{Gf}
The gap sequence $\{G_p(\omega)\}_{p\geq1}$ is $\mathbb{F}$ over the alphabet $\{G_1(\omega),G_2(\omega)\}$ for all $\omega\in\mathbb{F}$.
\end{property}

\section{The recursive structure of $\mathcal{P}_F$}

A known result is $|\mathbb{F}[1,p-1]|_a=\lfloor\phi p\rfloor$, where
$\phi=\frac{\sqrt{5}-1}{2}$ and $\lfloor\alpha\rfloor$ is the biggest integer not larger than $\alpha$.
Since $P(\omega,p)=L(\omega,p)+|\omega|-1$, we rewrite Proposition 3.4 in \cite{HW2015-1} as below.

\begin{property}\label{Pf}
$P(K_m,p)=pf_{m+1}+(\lfloor\phi p\rfloor+1)f_{m}-1$ for $m\geq-1$, $p\geq1$.
\end{property}

\noindent\emph{Example.} $P(a,p)=p+\lfloor\phi p\rfloor$,
$P(b,p)=2p+\lfloor\phi p\rfloor$
and $P(aa,p)=3p+2\lfloor\phi p\rfloor+1$.

\vspace{0.2cm}

By Corollary 2.10 in \cite{HW2015-1} and $K_{m+3}=K_{m+1}K_m K_{m+1}$, any factor $\omega$ with kernel $K_m$ can be expressed uniquely as
$\omega=K_{m+1}[i,f_{m+1}] K_m K_{m+1}[1,j]=K_{m+3}[i,f_{m+2}+j],$
where $2\leq i\leq f_{m+1}+1$, $0\leq j\leq f_{m+1}-1$, $m\geq-1$.
Let $\omega\in \mathcal{P}_F$. Since both $\omega$ and $Ker(\omega)$ are palindromes, and
$Ker(\omega)$ occurs only once in $\omega$, $Ker(\omega)$ occurs in the middle of $\omega$.
Thus

\begin{property}[]\
Any palindrome with kernel $K_m$ can be expressed uniquely as
$$K_{m+1}[i+1,f_{m+1}] K_m K_{m+1}[1,f_{m+1}-i]=K_{m+3}[i+1,f_{m+3}-i],\eqno(1)$$
where $1\leq i\leq f_{m+1}$ and $m\geq-1$.
\end{property}

\begin{property}[]\
Let $\omega$ be a palindrome with kernel $K_m$ satisfying the expression (1), then
$$P(\omega,p)=P(K_m,p)+f_{m+1}-i=pf_{m+1}+(\lfloor\phi p\rfloor+1)f_{m}-1+f_{m+1}-i,\eqno(2)$$
where $1\leq i\leq f_{m+1}$, $m\geq-1$ and $p\geq1$.
\end{property}

\noindent\emph{Remark.} Obviously $\{n:\mathbb{F}[1,n]\in\mathcal{P}_F\}
=\{n:\omega\in\mathcal{P}_F,|\omega|=P(\omega,1)=n\}$.
By expressions (1) and (2), $|\omega|=f_{m+3}-2i$, $P(\omega,1)=2f_{m+1}+f_m-i-1$.
Thus $|\omega|=P(\omega,1)$ if and only if $i=f_{m+3}-2f_{m+1}-f_m+1=1\in\{1,\cdots,f_{m+1}\}.$
This means $\mathbb{F}[1,n]$ is a palindrome if and only if $n=f_{m+3}-2$ for $m\geq-1$,
i.e. $n=f_m-2$ for $m\geq2$, which is Theorem 14 in \cite{DMSS2014}.

\vspace{0.2cm}

Define
$\langle K_m,p\rangle=\{P(\omega,p):\omega\in\mathcal{P}_F,Ker(\omega)=K_m\}$ for $m\geq-1$, $p\geq 1$,
which contains some consecutive integers.
By the expression (2),
\setcounter{equation}{2}
\begin{equation}
\begin{split}
\langle K_m,p\rangle=&\{P(K_m,p)+f_{m+1}-i,1\leq i\leq f_{m+1}\}
=\{P(K_m,p),\cdots,P(K_m,p)+f_{m+1}-1\}\\
=&\{pf_{m+1}+(\lfloor\phi p\rfloor+1) f_{m}-1,\cdots,(p+1)f_{m+1}+(\lfloor\phi p\rfloor+1)f_{m}-2\}.
\end{split}
\end{equation}
An immediately corollary is $\sharp\langle K_m,p\rangle=\sharp\{1\leq i\leq f_{m+1}\}=f_{m+1}$.

\begin{lemma}\label{Lf}
$\lfloor\phi(p+\lfloor\phi p\rfloor+1)\rfloor=p$ and $\lfloor\phi(2p+\lfloor\phi p\rfloor+1)\rfloor=p+\lfloor\phi p\rfloor$.
\end{lemma}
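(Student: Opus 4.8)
The plan is to reduce each of the two equalities to a two-sided inequality via the defining property of the floor function: $\lfloor x\rfloor=k$ if and only if $0\le x-k<1$. Throughout I would fix $p\ge1$, write $q=\lfloor\phi p\rfloor$, and set $\theta=\phi p-q$ for the fractional part of $\phi p$. Since $\phi=\frac{\sqrt5-1}{2}$ is irrational and $p$ is a positive integer, $\phi p$ is never an integer, so $\theta\in(0,1)$ strictly. The only facts I would use about $\phi$ come from its minimal polynomial $\phi^2+\phi-1=0$, namely $\phi^2=1-\phi$, $1+\phi=\phi^{-1}$, and the derived cubic relation $\phi^3=2\phi-1$.

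For the first identity I would compute $\phi(p+q+1)-p$ directly. Replacing $\phi p-p$ by $-p\phi^2$ (using $\phi-1=-\phi^2$) turns this into $\phi(q+1-\phi p)$, and then substituting $\phi p=q+\theta$ collapses it to $\phi(1-\theta)$. Since $\phi\in(0,1)$ and $1-\theta\in(0,1)$, the product lies in $(0,\phi)\subset[0,1)$, so the floor equals $p$. The whole point is that after the substitution the expression no longer depends on $p$ or $q$ individually, only on the single bounded quantity $\theta$.

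For the second identity I would run the same scheme on $\phi(2p+q+1)-(p+q)$. Collecting terms and using $2\phi-1=\phi^3$ together with $\phi-1=-\phi^2$ rewrites it as $\phi(p\phi^2-q\phi+1)$. The key move is to simplify $p\phi^2=\phi\cdot(\phi p)=\phi(q+\theta)=\phi q+\phi\theta$, after which the $\phi q$ terms cancel and the bracket becomes $1+\phi\theta$. Hence the expression equals $\phi+\phi^2\theta$. Using $\theta\in(0,1)$ together with the normalization $\phi+\phi^2=1$, this lies in $(\phi,1)\subset[0,1)$, so the floor equals $p+q$, which is exactly $p+\lfloor\phi p\rfloor$.

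I do not expect a genuine obstacle here: both claims are elementary once one commits to the floor-inequality reformulation. The only thing requiring care is the bookkeeping in the two algebraic simplifications, and in particular remembering to eliminate $p$ via $\phi p=q+\theta$ (and, in the second case, via $p\phi^2=\phi q+\phi\theta$) so that everything reduces to bounding a function of $\theta$ alone. The one place where I would be explicit is the strictness of the upper bounds, which rests on $\theta<1$ and hence ultimately on the irrationality of $\phi$ guaranteeing that $\theta$ never reaches the endpoints $0$ or $1$.
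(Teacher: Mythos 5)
Your proof is correct, but it follows a genuinely different route from the paper's. The paper derives both identities combinatorially, by computing a position in the Fibonacci word in two ways: since $aba=aG_1(a)a$, the $p$-th occurrence of $aba$ ends exactly where the $(P(a,p)+1)$-th letter $a$ ends, so $P(aba,p)=P(a,P(a,p)+1)$; on the other hand $Ker(aba)=b$ and expression (2) give $P(aba,p)=P(b,p)+1=2p+\lfloor\phi p\rfloor+1$. Equating the two and expanding $P(a,p+\lfloor\phi p\rfloor+1)$ via Property \ref{Pf} yields the first identity, and the second follows the same way from $aa=aG_2(a)a$ and $P(aa,p)=P(a,P(b,p)+1)$. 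Your argument is instead purely arithmetic: reduce each claim to the floor inequality $0\le x-k<1$, eliminate $p$ and $q=\lfloor\phi p\rfloor$ using $\phi^2=1-\phi$ and $\phi p=q+\theta$, and bound the resulting function of the fractional part $\theta$ alone (the first expression collapses to $\phi(1-\theta)$, the second to $\phi+\phi^2\theta$, and $\phi+\phi^2=1$ closes the estimate). What each buys: your route is self-contained and shows these are generic Beatty-type identities for any quadratic irrational satisfying $\phi^2=1-\phi$, with no word combinatorics needed; the paper's route reuses machinery already in place (Properties \ref{Gf} and \ref{Pf}, expression (2)) and fits its theme that such floor identities are shadows of the gap-sequence structure of $\mathbb{F}$.

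One small correction to your closing remark: the strict bound $\theta<1$ is automatic from the definition of the fractional part and does not rest on irrationality; what irrationality gives is $\theta>0$. In fact your argument survives with $\theta\in[0,1)$, since then $\phi(1-\theta)\in(0,\phi]\subset[0,1)$ and $\phi+\phi^2\theta\in[\phi,1)$, so the appeal to irrationality can simply be dropped.
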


\begin{proof}
(1) By Property \ref{Gf}, the $p$-th occurrence of $aba=aG_1(a)a$ is equivalent to the $P(a,p)$-th occurrence of $a$.
Moreover, the $(P(a,p)+1)$-th letter $a$ occurs at position $P(aba,p)$.
So $P(aba,p)=P(a,P(a,p)+1)$.
Since $Ker(aba)=b$, by expression (2), $P(aba,p)=P(b,p)+1=2p+\lfloor\phi p\rfloor+1$.

Since $P(a,p)=p+\lfloor\phi p\rfloor$, $P(a,P(a,p)+1)=P(a,p+\lfloor\phi p\rfloor+1)=p+\lfloor\phi p\rfloor+1+\lfloor\phi (p+\lfloor\phi p\rfloor+1)\rfloor$.

Comparing the two expressions, we have $\lfloor\phi(p+\lfloor\phi p\rfloor+1)\rfloor=p$.

(2) Similarly, since $aa=aG_2(a)a$, the $(P(b,p)+1)$-th letter $a$ occurs at position $P(aa,p)$. This means $P(aa,p)=P(a,P(b,p)+1)$.
By Property \ref{Pf}, the second equation holds.
\end{proof}

\noindent\emph{Remark.} By analogous arguments, using $P(aba,p)-2=P(a,P(a,p))$ and $P(aa,p)-1=P(a,P(b,p))$, we can get $\lfloor\phi (p+\lfloor\phi p\rfloor)\rfloor=p-1$ and $\lfloor\phi (2p+\lfloor\phi p\rfloor)\rfloor=p+\lfloor\phi p\rfloor$.

\begin{property}\label{P4.1}
$\langle K_m,p\rangle=\langle K_{m-2},P(b,p)+1\rangle\cup\langle K_{m-1},P(a,p)+1\rangle$ for $m,p\geq1$.
\end{property}

\begin{proof} We can prove this property by comparing the minimal and maximal elements in these sets. For instance,
$\min\langle K_m,p\rangle=pf_{m+1}+(\lfloor\phi p\rfloor+1) f_{m}-1$. By Lemma \ref{Lf},
\begin{equation*}
\begin{split}
&\min\langle K_{m-2},P(b,p)+1\rangle
=(P(b,p)+1)f_{m-1}+(\lfloor\phi (P(b,p)+1)\rfloor+1) f_{m-2}-1\\
=&(2p+\lfloor\phi p\rfloor+1)f_{m-1}+(\lfloor\phi (2p+\lfloor\phi p\rfloor+1)\rfloor+1) f_{m-2}-1\\
=&(2p+\lfloor\phi p\rfloor+1)f_{m-1}+(p+\lfloor\phi p\rfloor+1) f_{m-2}-1=\min\langle K_m,p\rangle.
\end{split}
\end{equation*}
Similarly, $\max\langle K_{m-2},P(b,p)+1\rangle+1=\min\langle K_{m-1},P(a,p)+1\rangle$, $\max\langle K_m,p\rangle=\max\langle K_{m-1},P(a,p)+1\rangle$.
Thus the conclusion holds.
\end{proof}

\noindent\emph{Example.} Take $m=4$ and $p=1$, then $P(b,p)+1=3$ and $P(a,p)+1=2$. Thus $\langle K_4,1\rangle=\{20,\cdots,32\}$ is the disjoint unite of $\langle K_2,3\rangle=\{20,\cdots,24\}$ and $\langle K_3,2\rangle=\{25,\cdots,32\}$.

\begin{property}[]\label{P4.2}
$\max\langle K_0,p\rangle=\max\langle b,p\rangle=\langle a,P(a,p)+1\rangle$  for $p\geq1$.
\end{property}

By Property \ref{P4.1} and \ref{P4.2}, we establish the following recursive relations for any $\langle K_m,p\rangle$. We call it the recursive structure of $\mathcal{P}_F$.
\begin{equation*}
\begin{cases}
\pi\langle K_m,p\rangle=\langle K_{m-2},P(b,p)+1\rangle\cup\langle K_{m-1},P(a,p)+1\rangle&\text{for }m\geq1;\\
\pi\langle K_0,p\rangle=\pi\langle b,p\rangle=\langle K_{-1},P(a,p)+1\rangle.
\end{cases}
\end{equation*}

On the other hand, (a) for any $m\geq-1$, each $\langle K_{m},1\rangle$ belongs to the recursive structure.
(b) Since $\mathbb{F}$ over alphabet $\{a,b\}$, $\mathbb{N}=\{1\}\cup\{P(a,\hat{p})+1,\hat{p}\geq1\}\cup\{P(b,\hat{p})+1,\hat{p}\geq1\}$. So for any $p\geq2$, there exists
$\hat{p}\geq1$ such that $P(a,\hat{p})+1=p$ or $P(b,\hat{p})+1=p$.
\begin{equation*}
\begin{cases}
\text{If there exists }\hat{p}\text{ such that }P(a,\hat{p})+1=p,
\begin{cases}
\langle K_{m},p\rangle\subset\pi\langle K_{m+1},\hat{p}\rangle\text{ for }m\geq0\\
\langle K_{m},p\rangle=\langle a,p\rangle=\pi\langle K_{m+1},\hat{p}\rangle\text{ for }m=-1
\end{cases}\\
\text{If there exists }\hat{p}\text{ such that }P(b,\hat{p})+1=p,
\langle K_{m},p\rangle\subset\pi\langle K_{m+2},\hat{p}\rangle\text{ for }m\geq-1.
\end{cases}
\end{equation*}
Thus the recursive structure contains all $\langle K_m,p\rangle$, i.e. contains all palindromes in $\mathbb{F}$.
\vspace{-0.3cm}
\small
\setlength{\unitlength}{1.2mm}
\begin{center}
\begin{picture}(125,65)
\put(0,0.5){\line(1,0){11}}
\put(0,63.5){\line(1,0){11}}
\put(0,0.5){\line(0,1){63}}
\put(11,0.5){\line(0,1){63}}
\put(4,1){32}
\put(4,4){31}
\put(4,10){30}
\put(4,16){29}
\put(4,19){28}
\put(4,25){27}
\put(4,28){26}
\put(4,34){25}
\put(4,40){24}
\put(4,43){23}
\put(4,49){22}
\put(4,55){21}
\put(4,58){20}
\put(1,61){$\langle K_4,1\rangle$}
\put(21,0.5){\line(1,0){11}}
\put(21,39.5){\line(1,0){11}}
\put(21,0.5){\line(0,1){39}}
\put(32,0.5){\line(0,1){39}}
\put(25,1){32}
\put(25,4){31}
\put(25,10){30}
\put(25,16){29}
\put(25,19){28}
\put(25,25){27}
\put(25,28){26}
\put(25,34){25}
\put(22,37){$\langle K_3,2\rangle$}
\put(42,39.5){\line(1,0){11}}
\put(42,63.5){\line(1,0){11}}
\put(42,39.5){\line(0,1){24}}
\put(53,39.5){\line(0,1){24}}
\put(46,40){24}
\put(46,43){23}
\put(46,49){22}
\put(46,55){21}
\put(46,58){20}
\put(43,61){$\langle K_2,3\rangle$}
\put(42,0.5){\line(1,0){11}}
\put(42,24.5){\line(1,0){11}}
\put(42,0.5){\line(0,1){24}}
\put(53,0.5){\line(0,1){24}}
\put(46,1){32}
\put(46,4){31}
\put(46,10){30}
\put(46,16){29}
\put(46,19){28}
\put(43,22){$\langle K_2,4\rangle$}
\put(63,0.5){\line(0,1){15}}
\put(74,0.5){\line(0,1){15}}
\put(63,0.5){\line(1,0){11}}
\put(63,15.5){\line(1,0){11}}
\put(67,40){24}
\put(67,43){23}
\put(67,49){22}
\put(64,52){$\langle K_1,5\rangle$}
\put(63,24.5){\line(0,1){30}}
\put(74,24.5){\line(0,1){30}}
\put(63,24.5){\line(1,0){11}}
\put(63,39.5){\line(1,0){11}}
\put(63,54.5){\line(1,0){11}}
\put(67,25){27}
\put(67,28){26}
\put(67,34){25}
\put(64,37){$\langle K_1,6\rangle$}
\put(67,1){32}
\put(67,4){31}
\put(67,10){30}
\put(64,13){$\langle K_1,7\rangle$}
\put(84,54.5){\line(0,1){9}}
\put(96,54.5){\line(0,1){9}}
\put(84,54.5){\line(1,0){12}}
\put(84,63.5){\line(1,0){12}}
\put(88,55){21}
\put(88,58){20}
\put(85,61){$\langle K_0,8\rangle$}
\put(84,39.5){\line(0,1){9}}
\put(96,39.5){\line(0,1){9}}
\put(84,48.5){\line(1,0){12}}
\put(84,39.5){\line(1,0){12}}
\put(88,40){24}
\put(88,43){23}
\put(85,46){$\langle K_0,9\rangle$}
\put(84,15.5){\line(0,1){18}}
\put(96,15.5){\line(0,1){18}}
\put(84,15.5){\line(1,0){12}}
\put(84,24.5){\line(1,0){12}}
\put(84,33.5){\line(1,0){12}}
\put(88,25){27}
\put(88,28){26}
\put(85,31){$\langle K_0,10\rangle$}
\put(88,16){29}
\put(88,19){28}
\put(85,22){$\langle K_0,11\rangle$}
\put(84,0.5){\line(0,1){9}}
\put(96,0.5){\line(0,1){9}}
\put(84,9.5){\line(1,0){12}}
\put(84,0.5){\line(1,0){12}}
\put(88,1){32}
\put(88,4){31}
\put(85,7){$\langle K_0,12\rangle$}
\put(106,0.5){\line(0,1){63}}
\put(120,0.5){\line(0,1){63}}
\put(106,0.5){\line(1,0){14}}
\put(106,9.5){\line(1,0){14}}
\put(106,15.5){\line(1,0){14}}
\put(106,24.5){\line(1,0){14}}
\put(106,33.5){\line(1,0){14}}
\put(106,39.5){\line(1,0){14}}
\put(106,48.5){\line(1,0){14}}
\put(106,54.5){\line(1,0){14}}
\put(106,63.5){\line(1,0){14}}
\put(112,55){21}
\put(107,61){$\langle K_{-1},13\rangle$}
\put(112,49){22}
\put(107,52){$\langle K_{-1},14\rangle$}
\put(112,40){24}
\put(107,46){$\langle K_{-1},15\rangle$}
\put(112,34){25}
\put(107,37){$\langle K_{-1},16\rangle$}
\put(112,25){27}
\put(107,31){$\langle K_{-1},17\rangle$}
\put(112,16){29}
\put(107,22){$\langle K_{-1},18\rangle$}
\put(112,10){30}
\put(107,13){$\langle K_{-1},19\rangle$}
\put(112,1){32}
\put(107,7){$\langle K_{-1},20\rangle$}
\put(12,50){\vector(1,0){29}}
\put(12,50){\vector(1,-2){8}}
\put(33,32){\vector(1,0){29}}
\put(33,32){\vector(1,-2){8}}
\put(54,60){\vector(1,0){29}}
\put(54,60){\vector(1,-2){8}}
\put(54,20){\vector(1,0){29}}
\put(54,20){\vector(1,-2){8}}
\put(75,52){\vector(1,0){30}}
\put(75,52){\vector(1,-1){8}}
\put(75,36){\vector(1,0){30}}
\put(75,36){\vector(1,-1){8}}
\put(75,12){\vector(1,0){30}}
\put(75,12){\vector(1,-1){8}}
\put(97,5){\vector(1,0){8}}
\put(97,20){\vector(1,0){8}}
\put(97,29){\vector(1,0){8}}
\put(97,44){\vector(1,0){8}}
\put(97,59){\vector(1,0){8}}
\end{picture}
\end{center}
\normalsize
\vspace{-0.2cm}
\centerline{Fig.1: The recursive structure of $\mathcal{P}_F$ from $\langle K_4,1\rangle$.}

\vspace{0.2cm}

By the recursive structure, we have the relation between the number of palindromes ending at position $pf_{m+1}+(\lfloor\phi p\rfloor+1) f_{m}+i-2$ and $f_{m+2}+i-2$, $1\leq i\leq f_{m+1}$. They are the $i$-th element in $\langle K_m,p\rangle$ and $\langle K_m,1\rangle$, respectively.

\begin{property}[]\label{P5.5} For $1\leq i\leq f_{m+1}$, $m,p\geq1$,
\begin{equation*}
\begin{split}
&\{\omega:\omega\in\mathcal{P}_F,\omega\triangleright\mathbb{F}[1,f_{m+2}+i-2]\}\\
=&\{\omega:\omega\in\mathcal{P}_F,\omega\triangleright\mathbb{F}[1,pf_{m+1}+(\lfloor\phi p\rfloor+1) f_{m}+i-2],Ker(\omega)=K_j,-1\leq j\leq m\}.
\end{split}
\end{equation*}
\end{property}

\noindent\emph{Example.} Taking $m=2$, $p=3$, $i=2$. All palindromes ending at position 8 are $\{a,aba,ababa\}$.
All palindromes ending at position 21 are $\{a,aba,ababa,\omega\}$ where $\omega=ababaababa$.
Since $Ker(a)=K_{-1}$, $Ker(aba)=K_0$, $Ker(ababa)=K_2$ and $Ker(\omega)=K_4$,
only $\{a,aba,ababa\}$ are palindromes with kernel $K_{j}$, $-1\leq j\leq 2$.

\section{The algorithm for counting $A(n)$}

By the recursive structure of $\mathcal{P}_F$, we can count the number of palindromes ending at position $n$. We denote by $a(n)=\sharp\{(\omega,p):\omega\in\mathcal{P}_F,\omega_p\triangleright\mathbb{F}[1,n]\}.$
Obviously, $A(n)=\sum_{i=1}^na(i)$.

By Property \ref{P4.1}, \ref{P4.2}, \ref{P5.5}, and the recursive structure of $\mathcal{P}_F$, we have:

\begin{theorem}\label{P4.3} The vectors $[a(1)]=[1]$, $[a(2),a(3)]=[1,2]$ and for $m\geq3$
\begin{equation*}
\begin{split}
&[a(f_{m}-1),\cdots,a(f_{m+1}-2)]\\
=&[a(f_{m-2}-1),\cdots,a(f_{m-1}-2),a(f_{m-1}-1),\cdots,a(f_{m}-2)]
+[\underbrace{1,\cdots,1}_{f_{m-1}}].
\end{split}
\end{equation*}
\end{theorem}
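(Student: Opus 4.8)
The plan is to first reduce $a(n)$ to a count indexed by kernels. Since distinct occurrences of a fixed palindrome end at distinct positions, $a(n)$ equals the number of distinct palindromic suffixes of $\mathbb{F}[1,n]$. By expression (2), for each fixed $\ell$ the $f_{\ell+1}$ palindromes sharing the $p$-th occurrence of $K_\ell$ end at the $f_{\ell+1}$ positions of $\langle K_\ell,p\rangle$, one palindrome per position; and by formula (3) one checks $\min\langle K_\ell,p+1\rangle\geq\max\langle K_\ell,p\rangle+1$, so the blocks $\langle K_\ell,p\rangle$ ($p\geq1$) are disjoint and increasing. Hence at most one palindrome with kernel $K_\ell$ ends at $n$, and
$$a(n)=\sharp\{\ell\geq-1:\ n\in\langle K_\ell,p\rangle\text{ for some }p\geq1\}.$$
The theorem concerns the positions $[f_m-1,f_{m+1}-2]$, which by formula (3) with $p=1$ is exactly $\langle K_{m-2},1\rangle$, whose $i$-th element is $f_m-2+i$ for $1\le i\le f_{m-1}$.

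Next I would split this block by the recursive structure. Property \ref{P4.1} with index $m-2$ and $p=1$ gives the disjoint decomposition $\langle K_{m-2},1\rangle=\langle K_{m-4},3\rangle\cup\langle K_{m-3},2\rangle$, where (as in the example after Property \ref{P4.1}) $\langle K_{m-4},3\rangle$ is the lower sub-block of length $f_{m-3}$ and $\langle K_{m-3},2\rangle$ the upper one of length $f_{m-2}$. For any $n$ in this block I claim $a(n)=1+(\text{a small-kernel count})$. The $+1$ is the unique palindrome with kernel exactly $K_{m-2}$ ending at $n$ (it exists since $n\in\langle K_{m-2},1\rangle$), and no kernel $K_j$ with $j\geq m-1$ contributes, because $\min\langle K_j,1\rangle=f_{j+2}-1\geq f_{m+1}-1>f_{m+1}-2\geq n$. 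It remains to count the kernels $K_j$ with $j\leq m-3$ ending a palindrome at $n$, and this is where Property \ref{P5.5} enters.

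For $n$ in the upper sub-block $\langle K_{m-3},2\rangle$, say its $i'$-th element, Property \ref{P5.5} (with parameters $m-3$ and $p=2$) identifies the palindromic suffixes of $\mathbb{F}[1,n]$ with kernel $K_j$, $-1\le j\le m-3$, with all palindromic suffixes at the $i'$-th element of $\langle K_{m-3},1\rangle$, i.e.\ at position $f_{m-1}-2+i'$; thus $a(n)=a(f_{m-1}-2+i')+1$. For $n$ in the lower sub-block $\langle K_{m-4},3\rangle$, say its $i$-th element, Property \ref{P5.5} (with parameters $m-4$ and $p=3$) matches the kernels $K_j$, $-1\le j\le m-4$, with all palindromic suffixes at the $i$-th element of $\langle K_{m-4},1\rangle$, i.e.\ at position $f_{m-2}-2+i$; here I must additionally verify that no palindrome with kernel $K_{m-3}$ ends at $n$, which holds because $\langle K_{m-3},1\rangle$, $\langle K_{m-3},2\rangle$, $\langle K_{m-3},3\rangle$ lie respectively below, disjoint from, and above the lower sub-block (the last using $\min\langle K_{m-3},3\rangle=f_{m+1}-1$). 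Hence $a(n)=a(f_{m-2}-2+i)+1$.

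Finally I would assemble the two cases. As $i$ runs over the lower block and $i'$ over the upper block, the values $a(f_m-2+i)$ read off exactly $[a(f_{m-2}-1),\dots,a(f_{m-1}-2)]$ followed by $[a(f_{m-1}-1),\dots,a(f_m-2)]$, each shifted by $+1$, which is the asserted identity; the vectors $[a(1)]$ and $[a(2),a(3)]$ are verified by hand from $\mathbb{F}=abaab\cdots$. The main obstacle is the position bookkeeping: pinning down, via formula (3) and Lemma \ref{Lf}, precisely which kernels contribute at each $n$ (the disjointness/ordering facts above), and dealing with the boundary kernels $K_0,K_{-1}$ that appear in the sub-blocks when $m=3,4$, where Property \ref{P5.5} must be supplemented by its base-case analogue from Property \ref{P4.2} (or the few values checked directly).
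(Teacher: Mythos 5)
Your proposal is correct and follows essentially the same route as the paper: the paper offers no written-out argument for this theorem beyond invoking Properties \ref{P4.1}, \ref{P4.2}, \ref{P5.5} and the recursive structure of $\mathcal{P}_F$, and your proof is exactly the detailed elaboration of that invocation (decompose $\langle K_{m-2},1\rangle=\langle K_{m-4},3\rangle\cup\langle K_{m-3},2\rangle$ via Property \ref{P4.1}, transfer counts to $\langle K_{m-4},1\rangle$ and $\langle K_{m-3},1\rangle$ via Property \ref{P5.5}, add $1$ for the unique kernel-$K_{m-2}$ palindrome, handle small $m$ via Property \ref{P4.2} or direct check). The disjointness and ordering facts you verify, and the exclusion of kernel $K_{m-3}$ on the lower sub-block, are precisely the bookkeeping the paper leaves implicit.
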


The first few values of $a(n)$ are
$[a(1)]=[1]$,
$[a(2),a(3)]=[1,2]$,

$[a(4),a(5),a(6)]=[a(1),a(2),a(3)]+[1,1,1]=[2,2,3]$,

$[a(7),\cdots,a(11)]=[a(2),\cdots,a(6)]+[1,1,1,1,1]=[2,3,3,3,4]$,

$[a(12),\cdots,a(19)]=[a(4),\cdots,a(11)]+[\underbrace{1,\cdots,1}_8]=[3,3,4,3,4,4,4,5]$.

$[a(20),\cdots,a(32)]=[a(7),\cdots,a(19)]+[\underbrace{1,\cdots,1}_{13}]=[3,4,4,4,5,4,4,5,4,5,5,5,6]$.

By considering $A(f_{m}-2)$ for $m\geq2$, we can determine the expressions of $A(f_{m})$ etc, and give a fast algorithm of $A(n)$ for all $n\geq1$.
Let $C(m)=A(f_{m+1}-2)-A(f_{m}-2)$ for $m\geq1$. An immediate corollary of Theorem \ref{P4.3} is
$C(m)=\sum_{n=f_{m}-1}^{f_{m+1}-2}a(n)
=\sum_{n=f_{m-2}-1}^{f_{m-1}-2}a(n)+\sum_{n=f_{m-1}-1}^{f_{m}-2}a(n)+f_{m-1}.$
This means $C(m)=C(m-2)+C(m-1)+f_{m-1}$.
By induction, we have

\begin{property}\label{PCm}  $C(m)=\sum_{i=-1}^{m-1} f_if_{m-i-2}-f_{m-1}
=\frac{m+1}{5}f_{m+1}+\frac{m-2}{5}f_{m-1}$ for $m\geq1$.
\end{property}

\noindent\emph{Remark.}  We firstly prove the expression
$\sum_{i=-1}^{m}f_if_{m-i-1}=\frac{m+2}{5}f_{m+2}+\frac{m+4}{5}f_{m}$ for $m\geq-1$.

\vspace{0.2cm}

Since $A(f_{m}-2)=\sum_{n=1}^{m-1}C(n)$ and $\sum_{i=-1}^mf_i= f_{m+2}-1$, we have the expression of $A(f_{m}-2)$ as below. In the proof, the Fibonacci representation is useful.
Such a representation can be written as a string $[a_1,a_2,\ldots,a_m]_F$ representing the integer $\sum_{i=-1}^{m-2}f_ia_{m-i-1}$. For example, the string $[1,0,0,1]_F$ is the Fibonacci representation of $f_2+f_{-1}=4$. Here we don't request $a_i\in\{0,1\}$.
Using the Fibonacci representation, $C(m)=\sum_{i=-1}^{m-1} f_if_{m-i-2}-f_{m-1}=[f_{-1},f_0,\ldots,f_{m-3},f_{m-2},0]_F$.

\begin{property}\label{PAf} $A(f_{m}-2)=\sum_{i=-1}^{m}f_if_{m-i-1}-f_{m+2}-f_{m}+2=\frac{m-3}{5}f_{m+2}+\frac{m-1}{5}f_{m}+2$ for $m\geq2$.
\end{property}

\begin{proof} For $m\geq2$, since $\sum_{i=-1}^{m}f_if_{m-i-1}=\frac{m+2}{5}f_{m+2}+\frac{m+4}{5}f_{m}$,
we only need to prove
$A(f_{m}-2)=\sum_{i=-1}^{m}f_if_{m-i-1}-f_{m+2}-f_{m}+2$.
Since $\sum_{i=-1}^mf_i= f_{m+2}-1$ and using the Fibonacci representation,
$$\begin{array}{rl}
&A(f_{m}-2)=\sum_{n=1}^{m-1}C(n)=\sum_{n=1}^{m-1}[f_{-1},f_0,\ldots f_{n-3},f_{n-2},0]_F\\
=&[f_{-1},f_{-1}+f_0,\ldots,\sum_{i=-1}^{m-4}f_{i},\sum_{i=-1}^{m-3}f_{i},0]_F
=[f_{1}-1,f_{2}-1,\ldots,f_{m-2}-1,f_{m-1}-1,0]_F\\
=&[f_{1},f_{2},\ldots,f_{m-2},f_{m-1},0]_F-\sum_{i=0}^{m-2}f_{i}
=[f_{1},f_{2},\ldots,f_{m-2},f_{m-1},0]_F-f_{m}+2;\\
&\sum_{i=-1}^{m}f_if_{m-i-1}-f_{m+2}-f_{m}+2
=[f_{-1},f_0,f_1,\ldots f_{m-2},f_{m-1},f_{m}]_F-f_{m+2}-f_{m}+2\\
=&[f_1,\ldots,f_{m-2},f_{m-1},0]_F-f_{-1}f_{m+2}-f_0f_{m}+f_{-1}f_{m}+f_0f_{m-1}+f_{m}f_{-1}+2.
\end{array}$$
Since $-f_{m+2}-f_{m}+f_{m}+f_{m-1}+f_{m}=-f_{m+2}+f_{m-1}+f_{m}=-f_{m}$,
the conclusion holds.
\end{proof}

By induction and using Theorem \ref{P4.3}, we have
$a(f_{m}-2)=m-1$, $a(f_{m}-1)=\lfloor\frac{m+1}{2}\rfloor$, $a(f_{m})=\lfloor\frac{m+2}{2}\rfloor$ and $a(f_{m}-1)+a(f_{m})=m+1$.
Thus we can determine the expressions of $A(f_{m}-3)$, $A(f_{m}-1)$, $A(f_{m})$.
Especially, since $A(f_{m})=A(f_{m}-2)+a(f_{m}-1)+a(f_{m})$, we have

\begin{theorem}[]
$A(f_m)=\frac{m-3}{5}f_{m+2}+\frac{m-1}{5}f_{m}+m+3$ for $m\geq0$.
\end{theorem}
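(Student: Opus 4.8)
The plan is to obtain $A(f_m)$ from the already-established closed form for $A(f_m-2)$ by appending the two remaining terms $a(f_m-1)$ and $a(f_m)$. Since $A(n)=\sum_{i=1}^n a(i)$, the relation $A(f_m)=A(f_m-2)+a(f_m-1)+a(f_m)$ holds whenever $f_m-2\geq 0$, i.e. for $m\geq 1$; this is exactly the identity recorded just before the statement. Thus for $m\geq 2$ the proof reduces to a single substitution: insert the closed form $A(f_m-2)=\frac{m-3}{5}f_{m+2}+\frac{m-1}{5}f_m+2$ from Property \ref{PAf}, together with the evaluation $a(f_m-1)+a(f_m)=m+1$, to get
$$A(f_m)=\tfrac{m-3}{5}f_{m+2}+\tfrac{m-1}{5}f_m+2+(m+1)=\tfrac{m-3}{5}f_{m+2}+\tfrac{m-1}{5}f_m+m+3.$$

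First I would assemble the three ingredients already in hand: the decomposition $A(f_m)=A(f_m-2)+a(f_m-1)+a(f_m)$; Property \ref{PAf} for the first summand; and the stated values $a(f_m-1)=\lfloor\frac{m+1}{2}\rfloor$, $a(f_m)=\lfloor\frac{m+2}{2}\rfloor$, whose sum is $m+1$ irrespective of the parity of $m$ (one floor rounds down while the other rounds up, so the fractional halves cancel). The substitution above then settles the whole range $m\geq 2$ immediately, with no further manipulation needed.

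What remains is the boundary. Property \ref{PAf} is only asserted for $m\geq 2$, and the subtract-two decomposition degenerates at $m=0$ because $f_0-2=-1$ is not a valid index (there is no $A(-1)$), so $m=0$ and $m=1$ must be checked directly against the claimed formula. For $m=0$ we have $\mathbb{F}[1,1]=a$, the only palindrome ending at position $1$ is $a$, hence $A(f_0)=A(1)=a(1)=1$, matching $\frac{-3}{5}f_2+\frac{-1}{5}f_0+3=\frac{-9}{5}-\frac{1}{5}+3=1$. For $m=1$ we have $\mathbb{F}[1,2]=ab$, the palindromes ending at positions $1$ and $2$ are $a$ and $b$, so $A(f_1)=A(2)=a(1)+a(2)=2$, matching $\frac{-2}{5}f_3+0+4=-2+4=2$.

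The main obstacle is essentially nonexistent at this stage, since all the genuine work has been absorbed into Property \ref{PAf} (the closed form for $A(f_m-2)$, itself resting on the Fibonacci-representation manipulation of $C(m)$) and into the inductive evaluation of $a(f_m-1)+a(f_m)$ via Theorem \ref{P4.3}. The only point that demands care is the index bookkeeping at the bottom of the range, where the recursion has no predecessor; handling $m=0$ (and, for safety, $m=1$) by hand and confirming agreement with the formula is the sole step keeping the statement honest across the full range $m\geq 0$.
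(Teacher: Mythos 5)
Your proposal is correct and follows essentially the same route as the paper: the identity $A(f_m)=A(f_m-2)+a(f_m-1)+a(f_m)$, the closed form for $A(f_m-2)$ from Property \ref{PAf}, and the evaluation $a(f_m-1)+a(f_m)=m+1$ derived from Theorem \ref{P4.3}. Your explicit verification of the boundary cases $m=0$ and $m=1$, where Property \ref{PAf} does not apply, is a small but welcome addition that the paper leaves implicit.
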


\noindent\emph{Example.} $A(f_5)=A(13)=\frac{2}{5}f_{7}+\frac{4}{5}f_{5}+8
=\frac{2}{5}\times34+\frac{4}{5}\times13+8=32$.

\vspace{0.2cm}

For any $n\geq1$, let $m$ such that $f_m\leq n+1< f_{m+1}$.
Then $A(n)=A(f_m-2)+\sum_{i=f_{m}-1}^{n}a(i)$.
In order to give a fast algorithm
of $A(n)$, we only need to calculate $\sum_{i=f_{m}-1}^{n}a(i)$. One method is calculating $a(n)$ by Theorem \ref{P4.3}, the other method is using the corollary as below.

\begin{corollary}[]\label{c5} For $n\geq1$, let $m$ such that $f_m\leq n+1< f_{m+1}$, then
\begin{equation*}
\sum_{i=f_{m}-1}^{n}a(i)=
\begin{cases}
\sum\limits_{i=f_{m-2}-1}^{n-f_{m-1}}a(i)+n-f_{m}+2,&n+1< 2f_{m-1};\\
\sum\limits_{i=f_{m-1}-1}^{n-f_{m-1}}a(i)+C(m-2)+n-f_{m}+2,&otherwise.
\end{cases}
\end{equation*}
\end{corollary}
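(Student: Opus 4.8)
The plan is to reduce the entire identity to a single pointwise recursion for $a(n)$ that is read off directly from Theorem \ref{P4.3}, to sum that recursion over the range in question, and then to split the result into the two stated branches according to how far the shifted upper index reaches.

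First I would extract the pointwise form of Theorem \ref{P4.3}. The block $[a(f_m-1),\dots,a(f_{m+1}-2)]$ has length $f_{m+1}-f_m=f_{m-1}$ and, by the theorem, equals the block $[a(f_{m-2}-1),\dots,a(f_m-2)]$ with $1$ added in every coordinate. Since the left endpoint $f_m-1$ is matched with $f_{m-2}-1$ and $f_m-1-(f_{m-2}-1)=f_{m-1}$, comparing the two blocks coordinate by coordinate yields
\[
a(n)=a(n-f_{m-1})+1,\qquad f_m-1\le n\le f_{m+1}-2.
\]
This holds for every $m\ge3$, i.e. wherever Theorem \ref{P4.3} applies; the finitely many smaller indices lie in the base blocks $[a(1)]$ and $[a(2),a(3)]$ and are checked directly.

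Next I would sum this relation over $i$ from $f_m-1$ to $n$. The hypothesis $f_m\le n+1<f_{m+1}$ places every such $i$ inside block $m$, so the recursion applies termwise; there are $n-f_m+2$ summands, and the substitution $j=i-f_{m-1}$ sends the range $f_m-1\le i\le n$ onto $f_{m-2}-1\le j\le n-f_{m-1}$. Hence
\[
\sum_{i=f_m-1}^{n}a(i)=\sum_{j=f_{m-2}-1}^{\,n-f_{m-1}}a(j)+(n-f_m+2).
\]
Finally I would locate the shifted upper limit $n-f_{m-1}$ relative to the block boundaries. Block $m-2$ ends at $f_{m-1}-2$, and $n-f_{m-1}\le f_{m-1}-2$ is exactly the stated condition $n+1<2f_{m-1}$; in that regime the displayed sum already has the shape of the first branch. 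Otherwise $n-f_{m-1}\ge f_{m-1}-1$ falls inside block $m-1$ (and $n\le f_{m+1}-2$ gives $n-f_{m-1}\le f_m-2$, so the tail is a genuine sub-sum of that block), and splitting at the boundary gives
\[
\sum_{j=f_{m-2}-1}^{\,n-f_{m-1}}a(j)=\sum_{j=f_{m-2}-1}^{\,f_{m-1}-2}a(j)+\sum_{j=f_{m-1}-1}^{\,n-f_{m-1}}a(j)=C(m-2)+\sum_{j=f_{m-1}-1}^{\,n-f_{m-1}}a(j),
\]
using the definition $C(m-2)=\sum_{j=f_{m-2}-1}^{f_{m-1}-2}a(j)$. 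Substituting back produces the second branch.

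The argument is short, and the only point demanding care is the index bookkeeping: verifying that the arithmetic condition $n+1<2f_{m-1}$ is equivalent to the statement that the shifted upper limit stays within block $m-2$, and checking the endpoint inequalities $f_{m-2}-1\le n-f_{m-1}\le f_m-2$ so that both sums are legitimate. Beyond keeping the Fibonacci indices aligned there is no real obstacle.
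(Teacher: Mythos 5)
Your proof is correct and takes essentially the same route as the paper: both rest on the termwise recursion $a(i)=a(i-f_{m-1})+1$ extracted from Theorem \ref{P4.3} together with the block-sum identity $C(m-2)=\sum_{i=f_{m-2}-1}^{f_{m-1}-2}a(i)$. The only difference is bookkeeping order---the paper splits the original sum at $2f_{m-1}-2$ and then shifts each piece, while you shift the whole sum first and then split at $f_{m-1}-2$---which produces the identical decomposition.
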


\begin{proof} When $f_m\leq n+1< 2f_{m-1}$,
$\sum_{i=f_{m}-1}^{n}a(i)=\sum_{i=f_{m-2}-1}^{n-f_{m-1}}(a(i)+1)
=\sum_{i=f_{m-2}-1}^{n-f_{m-1}}a(i)+n-f_{m}+2$.

When $2f_{m-1}\leq n+1< f_{m+1}$, $\sum_{i=f_{m}-1}^{n}a(i)
=\sum_{i=f_{m}-1}^{2f_{m-1}-2}a(i)+\sum_{i=2f_{m-1}-1}^{n}a(i)$,
where
$$\begin{array}{rl}
\begin{cases}
\sum_{i=f_{m}-1}^{2f_{m-1}-2}a(i)=\sum_{i=f_{m-2}-1}^{f_{m-1}-2}a(i)+ f_{m-3}=C(m-2)+f_{m-3};\\
\sum_{i=2f_{m-1}-1}^{n}a(i)=\sum_{i=f_{m-1}-1}^{n-f_{m-1}}a(i)+n-2f_{m-1}+2.
\end{cases}
\end{array}$$
Thus $\sum_{i=f_{m}-1}^{n}a(i)
=\sum_{i=f_{m-1}-1}^{n-f_{m-1}}a(i)+C(m-2)+n-f_{m}+2$.
The conclusion holds.
\end{proof}

\noindent\emph{Remark.} For $2f_{m-1}\leq n+1< f_{m+1}$ in Corollary \ref{c5}, by the expression of $C(m-2)$, we can also write $\sum_{i=f_{m}-1}^{n}a(i)
=\sum_{i=f_{m-1}-1}^{n-f_{m-1}}a(i)+n+\frac{m-11}{5}f_{m-1}+\frac{m+1}{5}f_{m-3}+2$.
But we prefer the expression in Corollary \ref{c5}, in order to compare with the expression in Corollary \ref{ct} for the Tribonacci sequence.

\vspace{0.2cm}

\noindent\emph{Example.} We calculate $\sum_{i=20}^{29}A(i)$. One method is using Theorem \ref{P4.3},
$\sum_{i=20}^{29}a(i)=3+4+4+4+5+4+4+5+4+5=42$.
The other method is using Corollary \ref{c5}.
Since $f_6=21\leq 29+1< f_{7}=34$, $m=6$. Moreover $2f_5=26\leq 29+1$, and $C(4)=15$ by Property \ref{PCm},
$$\begin{array}{c}
\sum_{i=f_{6}-1}^{29}a(i)
=\sum_{i=f_{5}-1}^{29-f_{5}}a(i)+C(4)+29-f_{6}+2
=\sum_{i=f_{5}-1}^{16}a(i)+25.
\end{array}$$
Similarly, $\sum_{i=f_{5}-1}^{16}a(i)=\sum_{i=f_{4}-1}^{8}a(i)+12$ and
$\sum_{i=f_{4}-1}^{8}a(i)=a(2)+a(3)+2=5$.

Thus $\sum_{i=f_{6}-1}^{29}a(i)=25+12+5=42$.

\vspace{0.2cm}

\noindent\textbf{Algorithm A.}

Step 1. Find the $m$ such that $f_m\leq n+1< f_{m+1}$, then $A(n)=A(f_m-2)+\sum_{i=f_{m}-1}^{n}a(i)$.

Step 2. Calculate $A(f_m-2)$ by the expression in Property \ref{PAf};

Calculate $\sum_{i=f_{m}-1}^{n}a(i)$ by the recursive relation in Corollary \ref{c5} or Theorem \ref{P4.3}.

\vspace{0.2cm}

\noindent\emph{Remark.} When $n$ is large (resp. small), Corollary \ref{c5} (resp. Theorem \ref{P4.3}) is faster.

\vspace{0.2cm}

\noindent\emph{Example.} Since $f_6\leq 29+1< f_{7}$, $m=6$.
Then
$A(29)=A(19)+\sum_{i=20}^{29}a(i)$.
By Property \ref{PAf}, $A(19)=
\frac{3}{5}f_{8}+\frac{5}{5}f_{6}+2=56$.
By Theorem \ref{P4.3} or Corollary \ref{c5}, $\sum_{i=20}^{29}a(i)=42$.
Thus $A(29)=98$.

\section{Preliminaries of the Tribonacci sequence}

Now we turn to discuss the numbers of repeated palindromes in $\mathbb{T}$.
Since we discuss the properties of $\mathbb{F}$ and $\mathbb{T}$ in different sections,
we prefer to still use the notation $K_m$, $Ker(\cdot)$, $P(\omega,p)$, $\langle K_m,p\rangle$ etc.

We define $T_m=\tau^m(a)$ for $m\geq0$, $T_{-2}=\varepsilon$, $T_{-1}=c$.
Then $T_0=a$, $T_1=ab$, $T_{m}=T_{m-1}T_{m-2}T_{m-3}$ for $m\geq2$,
and $|T_m|=t_m$ for $m\geq-2$.
Denote by $\delta_m$ the last letter of $T_m$ for $m\geq-1$, then $\delta_m=a$ (resp. $b$, $c$) for $m\equiv0$, (resp. 1, 2) mod 3.
We define the kernel numbers that $k_{0}=0$, $k_{1}=k_{2}=1$, $k_m=k_{m-1}+k_{m-2}+k_{m-3}-1$ for $m\geq3$.
The kernel word with order $m$ is defined as
$K_1=a$, $K_2=b$, $K_3=c$, $K_m=\delta_{m-1}T_{m-3}[1,k_m-1]$ for $m\geq4$.
By Proposition 2.7 in \cite{HW2015-2}, all kernel words are palindromes.
Let $Ker(\omega)$ be the maximal kernel word occurring in factor $\omega$, then by Theorem 4.3 in \cite{HW2015-2}, $Ker(\omega)$ occurs in $\omega$ only once.

\begin{property}[Theorem 4.11 in \cite{HW2015-2}]\label{wpt}
$Ker(\omega_p)=Ker(\omega)_p$ for all $\omega\in\mathbb{T}$ and $p\geq1$.
\end{property}

\begin{property}[Theorem 3.3 in \cite{HW2015-2}]\label{Gt}
The gap sequence $\{G_p(\omega)\}_{p\geq1}$ is the Tribonacci sequence over the alphabet $\{G_1(\omega)$,$G_2(\omega)$,$G_4(\omega)\}$ for all $\omega\in\mathbb{T}$.
\end{property}

The next two properties are useful in our proofs, and can be proved easily by induction.

\begin{property}[]\label{bp1}~
(1) $k_m=k_{m-3}+t_{m-4}=\frac{t_{m-3}+t_{m-5}+1}{2}$ for $m\geq3$;

(2) $K_m=\delta_{m-1}T_{m-4}K_{m-3}[2,k_{m-3}]=\delta_{m-1}T_{m-4}T_{m-5}[1,k_{m-3}-1]$ for $m\geq4$.
\end{property}

\begin{property}[]\label{bp3}~(1) $\sum_{i=0}^{m}t_i=\frac{t_m+t_{m+2}-3}{2}$
for $m\geq0$, (see Lemma 6.7 in \cite{G2006});

(2) $\sum_{i=1}^{m}k_i=\frac{k_m+k_{m+2}+m-1}{2}=\frac{t_{m-2}+t_{m-3}+m}{2}$ for $m\geq1$.
\end{property}

\section{The recursive structure of $\mathcal{P}_T$}

We denote by $|\omega|_\alpha$ the number of letter $\alpha$ occurring in $\omega$.
By an analogous argument as in Section 3, we establish the recursive structures of
$\mathcal{P}_T$ in this section.
Since $P(\omega,p)=L(\omega,p)+|\omega|-1$, we can rewrite Theorem 6.1 and Remark 6.2 in \cite{HW2015-2} as

\begin{property}[]\label{Pt} For $m,p\geq1$,

$P(K_m,p)=pt_{m-1}+|\mathbb{T}[1,p-1]|_a(t_{m-2}+t_{m-3})+|\mathbb{T}[1,p-1]|_bt_{m-2}+k_m-1$.
\end{property}

\noindent\emph{Example.}
$P(K_m,1)=t_{m-1}+k_m-1=k_{m+3}-1$ for $m\geq1$.
$P(a,p)=p+|\mathbb{T}[1,p-1]|_a+|\mathbb{T}[1,p-1]|_b$,
$P(b,p)=2p+2|\mathbb{T}[1,p-1]|_a+|\mathbb{T}[1,p-1]|_b$,
$P(c,p)=4p+3|\mathbb{T}[1,p-1]|_a+2|\mathbb{T}[1,p-1]|_b$ for $p\geq1$.

\vspace{0.2cm}

By Proposition 3.2, 4.9 and Corollary 4.13 in \cite{HW2015-2}, any factor $\omega$ with kernel $K_m$ can be expressed uniquely as
$\omega=(K_mG_4(K_m))[i,t_{m-1}] K_{m}(G_4(K_m)K_m)[1,j],$
where $2\leq i\leq t_{m-1}+1$, $0\leq j\leq t_{m-1}-1$, $G_4(K_m)=T_{m-1}[k_{m},t_{m-1}-1]$, $m\geq1$.

\begin{property}[]
Any palindrome with kernel $K_m$ can be expressed uniquely as
$$T_{m-1}[i,t_{m-1}-1] K_{m} T_{m}[k_{m},k_{m+3}-i-1]=K_{m+4}[i+1,k_{m+4}-i],\eqno(4)$$
where $1\leq i\leq t_{m-1}$ and $m\geq1$.
\end{property}

\begin{property}[]\
Let $\omega$ be a palindrome with kernel $K_m$ satisfying the expression (4), then
$$P(\omega,p)=P(K_m,p)+t_{m-1}-i\text{ for }1\leq i\leq t_{m-1},m\geq1.\eqno(5)$$
\end{property}

\noindent\emph{Remark.} Obviously $\{n:\mathbb{T}[1,n]\in\mathcal{P}_T\}
=\{n:\omega\in\mathcal{P}_T,|\omega|=P(\omega,1)=n\}$.
By expressions (4) and (5), $|\omega|=k_{m+4}-2i$, $P(\omega,1)=k_{m+4}-i-1$.
Thus $|\omega|=P(\omega,1)$ if and only if $i=1\in\{1,\cdots,t_{m-1}\}.$
This means $\mathbb{T}[1,n]$ is a palindrome if and only if $n=k_{m+4}-2$ for $m\geq1$. H.Mousavi and J.Shallit gave this result by mechanical method, see Theorem 11 in \cite{MS2014}.

\vspace{0.2cm}

Define
$\langle K_m,p\rangle=\{P(\omega,p):\omega\in\mathcal{P}_T,Ker(\omega)=K_m\}$ for $m,p\geq1$,
which contains some consecutive integers.
By the expression (5),
$$\langle K_m,p\rangle =\{P(K_m,p)+t_{m-1}-i,1\leq i\leq t_{m-1}\}\\
=\{P(K_m,p),\cdots,P(K_m,p)+t_{m-1}-1\}.\eqno(6)$$
An immediately corollary is $\sharp\langle K_m,p\rangle=\sharp\{1\leq i\leq t_{m-1}\}=t_{m-1}$ for $m\geq1$.

\begin{lemma}[]\label{Lt}
(1) $|\mathbb{T}[1,P(a,p)]|_a=|\mathbb{T}[1,P(b,p)]|_b=|\mathbb{T}[1,P(c,p)]|_c=p$;

(2) $|\mathbb{T}[1,P(b,p)]|_a=P(a,p)$, $|\mathbb{T}[1,P(a,p)|_b=|\mathbb{T}[1,p-1]|_a$;

(3) $|\mathbb{T}[1,P(c,p)]|_a=P(b,p)$, $|\mathbb{T}[1,P(c,p)]|_b=P(a,p)$.
\end{lemma}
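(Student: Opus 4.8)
The plan is to exploit the fixed-point structure $\mathbb{T}=\tau(\mathbb{T})$ and to read off all six identities from the block decomposition it induces. Part (1) requires essentially no work: by definition $P(\alpha,p)$ is the position of the last (here, the only) letter of the $p$-th occurrence of the single letter $\alpha$, so the prefix $\mathbb{T}[1,P(\alpha,p)]$ terminates exactly at the $p$-th $\alpha$ and therefore contains precisely $p$ copies of $\alpha$; applying this with $\alpha=a,b,c$ yields the three equalities in (1).

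For parts (2) and (3) I would first set up the block decomposition. Writing $\mathbb{T}=\tau(\mathbb{T})=\tau(\mathbb{T}[1])\tau(\mathbb{T}[2])\cdots$, each factor $\tau(\mathbb{T}[q])$ is a block equal to one of $ab$, $ac$, $a$ according as $\mathbb{T}[q]=a,b,c$. Two facts drive everything. First, every block begins with $a$ and contains exactly one $a$, so the occurrences of $a$ in $\mathbb{T}$ are in order-preserving bijection with the blocks, the $p$-th $a$ being the first letter of the $p$-th block; hence $\mathbb{T}[1,P(a,p)]=\tau(\mathbb{T}[1])\cdots\tau(\mathbb{T}[p-1])\,a$, the complete first $p-1$ blocks followed by one extra $a$. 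Second, the letter $b$ occurs only as the final letter of a block $ab$ (preimage $a$), while $c$ occurs only as the final letter of a block $ac$ (preimage $b$). I would record, as a preliminary observation, that cutting $\mathbb{T}$ immediately before each $a$ recovers these blocks unambiguously, since between consecutive $a$'s there is at most one non-$a$ letter; this synchronization is the one place where the combinatorics is genuinely used.

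Given this, each identity becomes a count over whole or truncated blocks. For $|\mathbb{T}[1,P(b,p)]|_a=P(a,p)$: the $p$-th $b$ sits at the end of the $p$-th block of type $ab$, and these blocks are indexed by the positions $q$ with $\mathbb{T}[q]=a$, whose $p$-th member is $q=P(a,p)$; hence $\mathbb{T}[1,P(b,p)]$ is exactly the first $P(a,p)$ complete blocks, and as each block carries a single $a$ the $a$-count equals the block count $P(a,p)$. For $|\mathbb{T}[1,P(a,p)]|_b=|\mathbb{T}[1,p-1]|_a$: the trailing partial block contributes no $b$, so the $b$-count equals the number of $ab$-blocks among the first $p-1$, i.e. the number of $a$'s among $\mathbb{T}[1],\ldots,\mathbb{T}[p-1]$. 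Part (3) runs identically: the $p$-th $c$ ends the $p$-th $ac$-block, which is block $q=P(b,p)$, so $\mathbb{T}[1,P(c,p)]$ is the first $P(b,p)$ complete blocks; counting $a$'s gives $P(b,p)$, and counting $b$'s gives the number of $a$'s among $\mathbb{T}[1],\ldots,\mathbb{T}[P(b,p)]$, namely $|\mathbb{T}[1,P(b,p)]|_a$, which by the identity just proved equals $P(a,p)$.

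The only real obstacle is the synchronization step, that is, justifying that the factorization into $\tau$-blocks is unique and that the $p$-th occurrence of $b$ (resp. $c$) lies in the block indexed by $P(a,p)$ (resp. $P(b,p)$); after this, every claim is a one-line block count. An alternative, more computational route would derive (2) and (3) directly from the explicit formulas for $P(a,p)$, $P(b,p)$, $P(c,p)$ recorded after Property \ref{Pt}, together with Property \ref{bp1}, but the block argument is cleaner and makes the parallel with the Fibonacci Lemma \ref{Lf} transparent.
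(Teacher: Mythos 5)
Your proof is correct, but it takes a genuinely different route from the paper's. The paper proves (2) and (3) entirely inside its own machinery: by Property \ref{Gt} (the gap sequence of $a$ being $\mathbb{T}$ over $\{G_1(a),G_2(a),G_4(a)\}$), the $p$-th occurrences of the short palindromes $aca$, $aba$, $aa$, $bab$ are located at specific occurrences of single letters, giving identities such as $P(aca,p)=P(a,P(b,p)+1)$ and $P(aa,p)=P(a,P(c,p)+1)$; each side is then evaluated via the kernel position formula (Property \ref{Pt}) and expression (5) (using $Ker(aca)=c$, $aa=K_4$, $bab=K_5$), and the unknown counts $|\mathbb{T}[1,P(b,p)]|_a$, $|\mathbb{T}[1,P(c,p)]|_a$, $|\mathbb{T}[1,P(c,p)]|_b$ are solved from the resulting linear equations (a $2\times 2$ system in part (3)). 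Your argument instead reads everything off the fixed-point decomposition $\mathbb{T}=\tau(\mathbb{T})$: each block $\tau(\mathbb{T}[q])\in\{ab,ac,a\}$ carries exactly one $a$ at its head, $b$'s close precisely the blocks with preimage $a$, and $c$'s close precisely the blocks with preimage $b$, so each identity is a count of whole or truncated blocks; your bookkeeping for all four identities checks out, including the final reduction of $|\mathbb{T}[1,P(c,p)]|_b$ to the previously established $|\mathbb{T}[1,P(b,p)]|_a=P(a,p)$. What each approach buys: yours is more elementary and self-contained (it needs neither kernel words, nor Property \ref{Pt}, nor the gap-sequence theorem, only the substitution structure, and the ``synchronization'' you worry about is in fact automatic once you write $\mathbb{T}$ as the concatenation of the blocks $\tau(\mathbb{T}[q])$), and it makes the identities conceptually transparent; the paper's proof, by contrast, exercises exactly the tools (gap sequences, kernel positions) that the rest of the paper is built on and runs parallel to the proof of the Fibonacci Lemma \ref{Lf}, so it reinforces the uniform method, at the cost of opaque linear algebra. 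One caution: your closing suggestion that (2) and (3) could alternatively be derived ``directly from the explicit formulas'' after Property \ref{Pt} is optimistic --- those formulas express positions in terms of letter counts, not conversely, and inverting them is essentially what the paper's equation-solving accomplishes; but since you did not rely on that remark, it does not affect the validity of your proof.
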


\begin{proof} (1) The conclusions hold by the definitions of $|\mathbb{T}[1,p]|_\alpha$ and $P(\alpha,p)$, $\alpha\in\{a,b,c\}$.

(2) By Property \ref{wpt}, the $p$-th occurrence of $aca=aG_2(a)a$ is equivalent to the $P(b,p)$-th occurrence of $a$.
Moreover, the $(P(b,p)+1)$-th letter $a$ occurs at position $P(aca,p)$.
So $P(aca,p)=P(a,P(b,p)+1)$.
Since $Ker(aca)=c$, by expression (5),
$$P(aca,p)=P(c,p)+1=4p+3|\mathbb{T}[1,p-1]|_a+2|\mathbb{T}[1,p-1]|_b+1.$$

On the other hand, $P(b,p)=2p+2|\mathbb{T}[1,p-1]|_a+|\mathbb{T}[1,p-1]|_b$, so
\begin{equation*}
\begin{split}
&P(a,P(b,p)+1)=P(b,p)+1+|\mathbb{T}[1,P(b,p)]|_a+|\mathbb{T}[1,P(b,p)]|_b\\
=&3p+2|\mathbb{T}[1,p-1]|_a+|\mathbb{T}[1,p-1]|_b+|\mathbb{T}[1,P(b,p)]|_a+1.
\end{split}
\end{equation*}

By $P(aca,p)=P(a,P(b,p)+1)$, $|\mathbb{T}[1,P(b,p)]|_a=p+|\mathbb{T}[1,p-1]|_a+|\mathbb{T}[1,p-1]|_b=P(a,p)$.

Similarly, since $aba=aG_1(a)a$, the $(P(a,p)+1)$-th letter $a$ occurs at position $P(aba,p)$. This means $P(aba,p)=P(a,P(a,p)+1)$.
By expression (5) and Property \ref{Pt}, the second equation holds.

(3) By analogous arguments, we have $P(aa,p)=P(a,P(c,p)+1)$ and $P(bab,p)=P(b,P(c,p)+1)$.
Since $aa=K_4$ and $bab=K_5$, by Property \ref{Pt}, we get two functions as below:
\begin{equation*}
\begin{cases}
|\mathbb{T}[1,P(c,p)]|_a+|\mathbb{T}[1,P(c,p)]|_b
=3p+3|\mathbb{T}[1,p-1]|_a+2|\mathbb{T}[1,p-1]|_b;\\
2|\mathbb{T}[1,P(c,p)]|_a+|\mathbb{T}[1,P(c,p)]|_b=5p+5|\mathbb{T}[1,p-1]|_a+3|\mathbb{T}[1,p-1]|_b.
\end{cases}
\end{equation*}
Thus $|\mathbb{T}[1,P(c,p)]|_a=P(b,p)$ and $|\mathbb{T}[1,P(c,p)]|_b=P(a,p)$. The conclusions hold.
\end{proof}

\begin{property}\label{p1}
$\langle K_m,p\rangle=\langle K_{m-3},P(c,p)+1\rangle\cup\langle K_{m-2},P(b,p)+1\rangle\cup\langle K_{m-1},P(a,p)+1\rangle$ for $m\geq4$.
\end{property}

\begin{property}\label{p2}
$\max\langle c,p\rangle=\max\langle b,P(a,p)+1\rangle$,
$\min\langle b,P(a,p)+1\rangle=\max\langle a,P(b,p)+1\rangle+1$,\\
$\min\langle c,p\rangle+1=\min\langle a,P(b,p)+1\rangle$,
$\max\langle b,p\rangle=\max\langle a,P(a,p)+1\rangle$.
\end{property}

By Property \ref{p1} and \ref{p2}, we establish the following recursive relations for any $\langle K_m,p\rangle$. We call it the recursive structure of $\mathcal{P}_T$.
\begin{equation*}
\begin{cases}
\pi\langle K_m,p\rangle=\langle K_{m-3},P(c,p)+1\rangle\cup\langle K_{m-2},P(b,p)+1\rangle\cup\langle K_{m-1},P(a,p)+1\rangle&\text{for }m\geq4;\\
\pi\langle K_3,p\rangle=\pi\langle c,p\rangle=\langle a,P(b,p)+1\rangle\cup\langle b,P(a,p)+1\rangle;\\
\pi\langle K_2,p\rangle=\pi\langle b,p\rangle=\langle a,P(a,p)+1\rangle.
\end{cases}
\end{equation*}

On the other hand, (a) For any $m\geq1$, each $\langle K_{m},1\rangle$ belongs to the recursive structure.
(b) Since $\mathbb{T}$ over alphabet $\{a,b,c\}$, $\mathbb{N}=\{1\}\cup\{P(a,\hat{p})+1,\hat{p}\geq1\}\cup\{P(b,\hat{p})+1,\hat{p}\geq1\}\cup\{P(c,\hat{p})+1,\hat{p}\geq1\}$. So for any $p\geq2$, there exists
$\hat{p}$ such that $P(a,\hat{p})+1=p$ or $P(b,\hat{p})+1=p$ or $P(c,\hat{p})+1=p$.
\begin{equation*}
\begin{cases}
\text{If there exists }\hat{p}\text{ such that }P(a,\hat{p})+1=p,
\begin{cases}
\langle K_{m},p\rangle\subset\pi\langle K_{m+1},\hat{p}\rangle\text{ for }m\geq3\\
\langle K_{m},p\rangle=\langle b,p\rangle\subset\pi\langle c,\hat{p}\rangle\text{ for }m=2\\
\langle K_{m},p\rangle=\langle a,p\rangle=\pi\langle b,\hat{p}\rangle\text{ for }m=1
\end{cases}\\
\text{If there exists }\hat{p}\text{ such that }P(b,\hat{p})+1=p,
\begin{cases}
\langle K_{m},p\rangle\subset\pi\langle K_{m+2},\hat{p}\rangle\text{ for }m\geq2\\
\langle K_{m},p\rangle=\langle a,p\rangle\subset\pi\langle c,\hat{p}\rangle\text{ for }m=1
\end{cases}\\
\text{If there exists }\hat{p}\text{ such that }P(c,\hat{p})+1=p,
\langle K_{m},p\rangle\subset\pi\langle K_{m+3},\hat{p}\rangle\text{ for }m\geq1.
\end{cases}
\end{equation*}
Thus the recursive structure contains all $\langle K_m,p\rangle$, i.e. contains all palindromes in $\mathbb{T}$.
\vspace{-0.3cm}
\small
\setlength{\unitlength}{1.1mm}
\begin{center}
\begin{picture}(115,62)
\put(5,1){27}
\put(30,1){27}
\put(55,1){27}
\put(80,1){27}
\put(105,1){27}
\put(101.5,7){$\langle K_1,15\rangle$}
\put(5,4){26}
\put(30,4){26}
\put(55,4){26}
\put(80,4){26}
\put(77,7){$\langle K_2,8\rangle$}
\put(5,10){25}
\put(30,10){25}
\put(55,10){25}
\put(105,10){25}
\put(101.5,16){$\langle K_1,14\rangle$}
\put(5,13){24}
\put(30,13){24}
\put(55,13){24}
\put(52,16){$\langle K_3,4\rangle$}
\put(5,19){23}
\put(30,19){23}
\put(80,19){23}
\put(105,19){23}
\put(101.5,25){$\langle K_1,13\rangle$}
\put(5,22){22}
\put(30,22){22}
\put(80,22){22}
\put(77,25){$\langle K_2,7\rangle$}
\put(5,28){21}
\put(30,28){21}
\put(105,28){21}
\put(27,31){$\langle K_4,2\rangle$}
\put(101.5,31){$\langle K_1,12\rangle$}
\put(5,34){20}
\put(55,34){20}
\put(80,34){20}
\put(105,34){20}
\put(101.5,40){$\langle K_1,11\rangle$}
\put(5,37){19}
\put(55,37){19}
\put(80,37){19}
\put(77,40){$\langle K_2,6\rangle$}
\put(5,43){18}
\put(55,43){18}
\put(105,43){18}
\put(101.5,49){$\langle K_1,10\rangle$}
\put(5,46){17}
\put(55,46){17}
\put(52,49){$\langle K_3,3\rangle$}
\put(5,52){16}
\put(80,52){16}
\put(105,52){16}
\put(102,58){$\langle K_1,9\rangle$}
\put(5,55){15}
\put(80,55){15}
\put(2,58){$\langle K_5,1\rangle$}
\put(77,58){$\langle K_2,5\rangle$}
\put(1,0){\line(1,0){11}}
\put(1,61){\line(1,0){11}}
\put(1,0){\line(0,1){61}}
\put(12,0){\line(0,1){61}}
\put(26,0){\line(1,0){11}}
\put(26,34){\line(1,0){11}}
\put(26,0){\line(0,1){34}}
\put(37,0){\line(0,1){34}}
\put(51,0){\line(1,0){11}}
\put(51,19){\line(1,0){11}}
\put(51,0){\line(0,1){19}}
\put(62,0){\line(0,1){19}}
\put(51,33){\line(1,0){11}}
\put(51,52){\line(1,0){11}}
\put(51,33){\line(0,1){19}}
\put(62,33){\line(0,1){19}}
\put(76,0){\line(1,0){11}}
\put(76,10){\line(1,0){11}}
\put(76,0){\line(0,1){10}}
\put(87,0){\line(0,1){10}}
\put(76,18){\line(1,0){11}}
\put(76,28){\line(1,0){11}}
\put(76,18){\line(0,1){10}}
\put(87,18){\line(0,1){10}}
\put(76,33){\line(1,0){11}}
\put(76,43){\line(1,0){11}}
\put(76,33){\line(0,1){10}}
\put(87,33){\line(0,1){10}}
\put(76,51){\line(1,0){11}}
\put(76,61){\line(1,0){11}}
\put(76,51){\line(0,1){10}}
\put(87,51){\line(0,1){10}}
\put(101,0){\line(0,1){61}}
\put(113,0){\line(0,1){61}}
\put(101,0){\line(1,0){12}}
\put(101,9.5){\line(1,0){12}}
\put(101,18.5){\line(1,0){12}}
\put(101,27.5){\line(1,0){12}}
\put(101,33.5){\line(1,0){12}}
\put(101,42.5){\line(1,0){12}}
\put(101,51.5){\line(1,0){12}}
\put(101,61){\line(1,0){12}}
\put(13,55){\vector(1,0){62}}
\put(13,55){\vector(3,-1){37}}
\put(13,55){\vector(1,-3){12}}
\put(38,30){\vector(1,0){62}}
\put(38,30){\vector(4,-1){37}}
\put(38,30){\vector(1,-2){12}}
\put(63,45){\vector(1,0){37}}
\put(63,45){\vector(2,-1){12}}
\put(63,12){\vector(1,0){37}}
\put(63,12){\vector(2,-1){12}}
\put(88,5){\vector(1,0){12}}
\put(88,21){\vector(1,0){12}}
\put(88,38){\vector(1,0){12}}
\put(88,55){\vector(1,0){12}}
\end{picture}
\end{center}
\normalsize
\vspace{-0.2cm}
\centerline{Fig.2: The recursive structure of $\mathcal{P}_T$ from $\langle K_5,1\rangle$.}

\vspace{0.2cm}

By the recursive structure, we have the relation between the number of palindromes ending at position $P(K_m,p)+i-1$ and $k_{m+3}+i-2$, $1\leq i\leq t_{m-1}$.
They are the $i$-th element in $\langle K_m,p\rangle$ and $\langle K_m,1\rangle$, respectively.

\begin{property}[]\label{p4} For $1\leq i\leq t_{m-1}$, $m\geq-1$, $p\geq1$,
\begin{equation*}
\begin{split}
&\{\omega:\omega\in\mathcal{P}_T,\omega\triangleright\mathbb{T}[1,k_{m+3}+i-2]\}\\
=&\{\omega:\omega\in\mathcal{P}_T,\omega\triangleright\mathbb{T}[1,
P(K_m,p)+i-1],Ker(\omega)=K_j,1\leq j\leq m\}.
\end{split}
\end{equation*}
\end{property}

\noindent\emph{Example.} Taking $m=4$, $p=3$, $i=2$. All palindromes ending at position $9$ are $\{b,baab\}$.
All palindromes ending at position $33$ are $\{b,baab,\omega\}$ where $\omega=baabacabacabaab$.
Since $Ker(b)=K_{2}$, $Ker(baab)=K_4$ and $Ker(\omega)=K_6$,
only $\{b,baab\}$ are palindromes with kernel $K_{j}$, $1\leq j\leq 4$.

\section{The algorithm for counting B(n)}

By an analogous argument as in Section 4, we give algorithms for
counting $B(n)$ in this section.
Comparing with the work on $\mathbb{F}$, the only difficulty on $\mathbb{T}$ is getting explicit expression for $B(t_m)$.

Denote $b(n)=\sharp\{(\omega,p):\omega\in\mathcal{P}_T, \omega_p\triangleright\mathbb{T}[1,n]\}.$
Obviously, $B(n)=\sum_{i=1}^nb(i)$. By Property \ref{p1}, \ref{p2}, \ref{p4}, and the recursive structure of $\mathcal{P}_T$, we have:

\begin{theorem}\label{b} $[b(1)]=[1]$, $[b(2),b(3)]=[1,2]$, $[b(4),b(5),b(6),b(7)]=[1,2,2,3]$ and for $m\geq4$
\begin{equation*}
\begin{split}
&[b(k_{m+3}-1),\cdots,b(k_{m+4}-2)]\\
=&[b(k_{m}-1),\cdots,b(k_{m+1}-2),b(k_{m+1}-1),\cdots,b(k_{m+2}-2),b(k_{m+2}-1),\cdots,b(k_{m+3}-2)]\\
&~~~~~~~~~~~~~~~~~~~~~~~~~~~~~~~~~~~~~~~~~~~~~~~~~~~~~~~~~~~~~~~~~~~~~~~~~~~~~~~~~~~~~~~~~~~~~
+[\underbrace{1,\cdots,1}_{t_{m-1}}].
\end{split}
\end{equation*}
\end{theorem}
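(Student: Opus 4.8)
The plan is to read the recursion directly off the recursive structure of $\mathcal{P}_T$ once $b(n)$ has been reinterpreted as a count of kernel intervals. First I would record that, by expression (5) and Property \ref{wpt}, a palindrome occurrence $\omega_p$ with $Ker(\omega)=K_m$ ends at $P(K_m,p)+t_{m-1}-i$ for a unique $i\in\{1,\dots,t_{m-1}\}$, and that every element of $\langle K_m,p\rangle$ arises in this way from exactly one palindrome; hence $b(n)=\sharp\{(m,p):m\geq1,\ p\geq1,\ n\in\langle K_m,p\rangle\}$, the number of kernel intervals containing $n$. The base vectors $[b(1)]$, $[b(2),b(3)]$, $[b(4),\dots,b(7)]$ are then verified by listing the palindromic suffixes of the prefix $abacaba$. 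Using $P(K_m,1)=k_{m+3}-1$, $\sharp\langle K_m,1\rangle=t_{m-1}$ and the identity $k_{m+4}-k_{m+3}=t_{m-1}$, I would note $\langle K_m,1\rangle=\{k_{m+3}-1,\dots,k_{m+4}-2\}$, so the left-hand vector of the theorem is exactly the list of values $b(n)$ for $n$ ranging over $\langle K_m,1\rangle$.

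The second step decomposes this interval. Property \ref{p1} at $p=1$, together with $P(a,1)+1=2$, $P(b,1)+1=3$, $P(c,1)+1=5$, gives $\langle K_m,1\rangle=\langle K_{m-3},5\rangle\cup\langle K_{m-2},3\rangle\cup\langle K_{m-1},2\rangle$, and the min/max comparisons of Property \ref{p2} (in their general form) show the three blocks are consecutive, non-overlapping and appear in this left-to-right order, with $t_{m-4},t_{m-3},t_{m-2}$ elements respectively. For each child $(m',p)\in\{(m-3,5),(m-2,3),(m-1,2)\}$ I would then invoke Property \ref{p4}: it identifies the palindromic suffixes at the $i$-th element of $\langle K_{m'},1\rangle=\{k_{m'+3}-1,\dots,k_{m'+4}-2\}$ with those at the $i$-th element of $\langle K_{m'},p\rangle$ whose kernel order is at most $m'$. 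Since the three intervals $\langle K_{m-3},1\rangle$, $\langle K_{m-2},1\rangle$, $\langle K_{m-1},1\rangle$ are precisely the index ranges of the three right-hand blocks, this matches the right-hand side entrywise and yields, for every $n\in\langle K_m,1\rangle$,
$$b(n)=(\text{matching right-hand entry})+\sharp\{\omega\in\mathcal{P}_T:\omega\triangleright\mathbb{T}[1,n],\ Ker(\omega)=K_j,\ j>m'\}.$$
It therefore remains to prove that this correction term is always $1$.

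The main obstacle is exactly this correction count, and I would settle it in three parts. First, no palindromic suffix at $n$ has kernel order $\geq m+1$: such a palindrome contains $K_{m+1}$ and hence ends at position $\geq P(K_{m+1},1)=k_{m+4}-1>\max\langle K_m,1\rangle$. Second, because $n\in\langle K_m,1\rangle$, exactly one palindromic suffix at $n$ has kernel $K_m$. Third, the delicate point is to exclude the intermediate orders $m'<j<m$: within $\langle K_m,1\rangle$ the subtree of the recursive structure hanging below the child $\langle K_{m'},p\rangle$ carries only kernels of order at most $m'$, while every $K_j$-interval of intermediate order $m'<j<m$ sits in a sibling child disjoint from $\langle K_{m'},p\rangle$, so the position $n\in\langle K_{m'},p\rangle$ meets none of them. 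Hence the only admissible extra kernel is $K_m$ itself, the correction term equals $1$ at all $t_{m-1}$ positions, and this produces the summand $[\underbrace{1,\dots,1}_{t_{m-1}}]$, completing the induction. Conceptually the whole argument is the self-similarity of the recursive structure: the subtree below a child of $\langle K_m,1\rangle$ is an exact copy—same kernel orders, same interval sizes, hence same depth profile—of the tree rooted at the corresponding $\langle K_{m'},1\rangle$, with the single added ancestor $\langle K_m,1\rangle$ accounting for the $+1$.
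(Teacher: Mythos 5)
Your proof is correct and follows the paper's own route: the paper obtains Theorem \ref{b} precisely from Properties \ref{p1}, \ref{p2}, \ref{p4} and the recursive structure of $\mathcal{P}_T$ (giving no further detail), and your argument is a careful elaboration of exactly that, including the tree/self-similarity point needed to exclude kernels of intermediate order. One small repair: a palindromic suffix at $n$ with kernel order $j\geq m+1$ need not contain $K_{m+1}$ (in $\mathbb{T}$ one has $K_{m-1}\not\prec K_m$ and $K_{m-2}\not\prec K_m$), so the exclusion should instead say that such a suffix contains $K_j$ and hence ends at position $\geq P(K_j,1)=k_{j+3}-1\geq k_{m+4}-1>\max\langle K_m,1\rangle$, or simply invoke Property \ref{p4} with $p=1$, which states that all palindromic suffixes at positions of $\langle K_m,1\rangle$ have kernel order at most $m$.
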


The first few values of $b(n)$ are
$[b(1)]=[1]$, $[b(2),b(3)]=[1,2]$, $[b(4),b(5),b(6),b(7)]=[1,2,2,3]$,

$[b(8),\cdots,b(14)]=[b(1),\cdots,b(7)]+[1,1,1,1,1,1,1]=[2,2,3,2,3,3,4]$,

$[b(15),\cdots,b(27)]=[b(2),\cdots,b(14)]+[\underbrace{1,\cdots,1}_{13}]
=[2,3,2,3,3,4,3,3,4,3,4,4,5]$.

By considering $B(k_{m+4}-2)$ for $m\geq1$, we can determine the expressions of $B(t_{m})$ etc, and give a fast algorithm of $B(n)$ for all $n\geq1$.
Let $D(m)=B(k_{m+4}-2)-B(k_{m+3}-1)$, then an immediate corollary of Theorem \ref{b} is
$$\begin{array}{c}
D(m)=\sum_{n=k_{m+3}-1}^{k_{m+4}-2}b(n)
=\sum_{n=k_{m}-1}^{k_{m+1}-2}b(n)+\sum_{n=k_{m+1}-1}^{k_{m+2}-2}b(n)
+\sum_{n=k_{m+2}-1}^{k_{m+3}-2}b(n)+t_{m-1}.
\end{array}$$
This means $D(m)=D(m-3)+D(m-2)+D(m-1)+t_{m-1}$. By induction, we have

\begin{property}\ For $m\geq1$,
$$\begin{array}{c}
D(m)=\sum_{i=-1}^{m-2}t_it_{m-i-2}
=\frac{m}{22}(3t_{m}+7t_{m-1}+2t_{m-2})+\frac{1}{22}(3t_{m}-3t_{m-1}+4t_{m-2}).
\end{array}$$
\end{property}

\begin{lemma}[]
$\sum_{i=0}^{m}(i+1)t_i=\frac{m}{2}(t_{m+2}+t_{m})-\frac{1}{2}(t_{m-1}+t_{m-2})+\frac{3}{2}$ for $m\geq0$.
\end{lemma}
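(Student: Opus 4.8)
The plan is to prove the identity by induction on $m$. The base case $m=0$ is immediate: the left side is $(0+1)t_0=1$, while the right side is $\tfrac{0}{2}(t_2+t_0)-\tfrac12(t_{-1}+t_{-2})+\tfrac32=-\tfrac12(1+0)+\tfrac32=1$, so the two agree (recall $t_{-1}=1$, $t_{-2}=0$).

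For the inductive step I would assume the formula for some $m\geq0$. Since the left side increases by exactly $(m+2)t_{m+1}$ when $m$ is replaced by $m+1$, it suffices to check that the right side increases by the same amount, i.e. that
$$\tfrac{m+1}{2}(t_{m+3}+t_{m+1})-\tfrac{m}{2}(t_{m+2}+t_m)-\tfrac12(t_m+t_{m-1})+\tfrac12(t_{m-1}+t_{m-2})=(m+2)t_{m+1}.$$
I would split the left side into the part carrying the factor $m$ and the remaining $m$-free part. The $m$-part equals $\tfrac{m}{2}\big[(t_{m+3}+t_{m+1})-(t_{m+2}+t_m)\big]$, and using $t_{m+3}-t_{m+2}=t_{m+1}+t_m$ together with $t_{m+1}-t_m=t_{m-1}+t_{m-2}$ the bracket collapses to $t_{m+1}+t_m+t_{m-1}+t_{m-2}=2t_{m+1}$, contributing $m\,t_{m+1}$. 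The $m$-free part simplifies to $\tfrac12\big[t_{m+3}+t_{m+1}-t_m+t_{m-2}\big]$, and applying $t_{m+3}-t_m=t_{m+2}+t_{m+1}$ and then $t_{m+2}+t_{m-2}=2t_{m+1}$ reduces it to $2t_{m+1}$. Adding the two pieces gives $(m+2)t_{m+1}$, which closes the induction.

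All the auxiliary relations used here, namely $t_n+t_{n-1}+t_{n-2}=t_{n+1}$ and $t_{n+2}+t_{n-2}=2t_{n+1}$, are one-line consequences of the defining recurrence $t_n=t_{n-1}+t_{n-2}+t_{n-3}$, and each stays valid down to the smallest index $t_{-2}=0$ that arises when $m=0$. The only delicate point I anticipate is the bookkeeping in the inductive step: one must keep the coefficient $m$ attached to the correct group of terms so that the $m$-linear and the $m$-free contributions cancel \emph{separately} against $(m+2)t_{m+1}=m\,t_{m+1}+2t_{m+1}$, since collapsing the brackets too early blurs this separation.

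As an alternative that bypasses the inductive bookkeeping, I could argue by summation by parts. Writing $i+1=\sum_{j=0}^{i}1$ and exchanging the order of summation gives $\sum_{i=0}^{m}(i+1)t_i=\sum_{j=0}^{m}\sum_{i=j}^{m}t_i$, and each inner sum can be evaluated through the partial-sum identity $\sum_{i=0}^{N}t_i=\tfrac{t_N+t_{N+2}-3}{2}$ of Property \ref{bp3}(1). Summing the resulting expression over $j$ and simplifying with the recurrence would yield the same closed form; this route trades the careful index tracking of the induction for a longer but more mechanical computation.
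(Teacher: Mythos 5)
Your induction proof is correct and is exactly the approach the paper intends: the paper simply states that this lemma (along with the adjacent properties) "can be proved easily by induction," and your argument supplies those details, with the base case, the splitting into $m$-linear and $m$-free parts, and the auxiliary recurrence identities all checking out down to the boundary values $t_{-1}=1$, $t_{-2}=0$. No gap to report.
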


\begin{property}\label{B}\ $B(k_{m+4}-2)=\frac{m}{44}(8t_{m+2}+4t_{m+1}-2t_{m})
-\frac{1}{44}(9t_{m+2}-3t_{m})+\frac{3}{4}$ for $m\geq1$.
\end{property}

The three properties above can be proved easily by induction. For any $n\geq1$, let $m$ such that $k_{m+4}\leq n+1< k_{m+5}$.
Then $B(n)=B(k_{m+4}-2)+\sum_{i=k_{m+4}-1}^nb(i)$. In order to give a fast algorithm of $B(n)$, we only need to calculate $\sum_{i=k_{m+4}-1}^nb(i)$. One method is calculating $b(n)$ by Theorem \ref{b}, the other method is using the corollary as below.

\begin{corollary}\label{ct} For $n\geq15$, let $m$ such that $k_{m+3}\leq n+1< k_{m+4}$, then $m\geq4$
\begin{equation*}
\sum_{i=k_{m+3}-1}^nb(i)
=\begin{cases}
\sum\limits_{i=k_{m}-1}^{n-t_{m-1}}b(i)+n-k_{m+3}+2,&n+1< \alpha_m;\\
\sum\limits_{i=k_{m+1}-1}^{n-t_{m-1}}b(i)+D(m-3)+n-k_{m+3}+2,
&\alpha_m\leq n+1< \beta_m;\\
\sum\limits_{i=k_{m+2}-1}^{n-t_{m-1}}b(i)+D(m-3)+D(m-2)+n-k_{m+3}+2,
&\beta_m\leq n+1.
\end{cases}
\end{equation*}
where $D(m)=\frac{m}{22}(3t_{m}+7t_{m-1}+2t_{m-2})+\frac{1}{22}(3t_{m}-3t_{m-1}+4t_{m-2})$, $\alpha_m=k_{m+3}+t_{m-4}$ and $\beta_m=k_{m+3}+t_{m-4}+t_{m-3}$.
\end{corollary}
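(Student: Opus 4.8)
The plan is to mirror the proof of Corollary \ref{c5}: collapse the target sum into a single-level shift of itself and then peel off complete blocks so that the residual sum is again of the form $\sum_{i=k_{j+3}-1}^{\,\cdot}b(i)$ for a smaller index. The starting observation is that the three sub-blocks appearing in Theorem \ref{b} are consecutive — the first ends at $b(k_{m+1}-2)$, the second begins at $b(k_{m+1}-1)$, the third begins at $b(k_{m+2}-1)$ — so their concatenation is nothing but $[b(k_m-1),\dots,b(k_{m+3}-2)]$. Since $t_{m-1}=k_{m+3}-k_m$ (Property \ref{bp1}(1)), the target block $[b(k_{m+3}-1),\dots,b(k_{m+4}-2)]$ has the same length $t_{m-1}$, and Theorem \ref{b} therefore reduces to the single shift identity
$$b(i)=b(i-t_{m-1})+1\qquad(k_{m+3}-1\le i\le k_{m+4}-2).$$

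First I would apply this identity termwise. Summing over $[k_{m+3}-1,n]$ and re-indexing by $t_{m-1}=k_{m+3}-k_m$ sends the lower limit $k_{m+3}-1$ to $k_m-1$ and contributes one $+1$ per term, i.e. $n-k_{m+3}+2$ in all, giving
$$\sum_{i=k_{m+3}-1}^{n}b(i)=\sum_{i=k_m-1}^{\,n-t_{m-1}}b(i)+(n-k_{m+3}+2),$$
which is already the first case of the statement. Note this identity is valid for every admissible $n$; the three cases arise only because I want the residual sum to begin at the correct base $k_{j+3}-1$ for the next recursive step, and that base depends on which sub-block the shifted endpoint $n-t_{m-1}$ lands in.

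Next I would locate $n-t_{m-1}$ among the three sub-blocks $[k_m-1,k_{m+1}-2]$, $[k_{m+1}-1,k_{m+2}-2]$, $[k_{m+2}-1,k_{m+3}-2]$. Using the length relations $k_{m+1}-k_m=t_{m-4}$ and $k_{m+2}-k_{m+1}=t_{m-3}$ (both following from Property \ref{bp1}(1) together with the kernel recurrence $k_m=k_{m-1}+k_{m-2}+k_{m-3}-1$), the first sub-block has length $t_{m-4}$ and the second has length $t_{m-3}$, so the case thresholds are exactly $\alpha_m=k_{m+3}+t_{m-4}$ and $\beta_m=k_{m+3}+t_{m-4}+t_{m-3}$. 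In the middle case I split off the complete first sub-block $\sum_{i=k_m-1}^{k_{m+1}-2}b(i)$, and in the last case the first two sub-blocks; by the definition $D(j)=\sum_{n=k_{j+3}-1}^{k_{j+4}-2}b(n)$ these are precisely $D(m-3)$ and $D(m-3)+D(m-2)$, which produces the remaining two cases (the hypothesis $m\ge4$ guarantees that Theorem \ref{b} applies to the governing block and that $m-3\ge1$, so the reduced levels are legitimate).

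The entire argument is routine index arithmetic, and I expect the only step needing genuine care to be the very first one: recognizing that the three sub-blocks of Theorem \ref{b} glue into the consecutive interval $[k_m-1,k_{m+3}-2]$ of the same length $t_{m-1}$ as the target block, so that Theorem \ref{b} collapses to the clean shift $b(i)=b(i-t_{m-1})+1$. Once that identity and the two length relations $k_{m+1}-k_m=t_{m-4}$, $k_{m+2}-k_{m+1}=t_{m-3}$ are established, the case split at $\alpha_m,\beta_m$ and the identification of the peeled-off full blocks with $D(m-3)$ and $D(m-2)$ are immediate.
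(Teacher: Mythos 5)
Your proposal is correct and follows essentially the same argument as the paper: both rest on the termwise shift identity $b(i)=b(i-t_{m-1})+1$ from Theorem \ref{b}, the index relations $k_{m+3}-t_{m-1}=k_m$, $k_{m+1}-k_m=t_{m-4}$, $k_{m+2}-k_{m+1}=t_{m-3}$ (whence the thresholds $\alpha_m,\beta_m$), and the identification of the peeled-off complete sub-blocks with $D(m-3)$ and $D(m-2)$. The only difference is organizational — you shift the whole sum first and then split off full blocks, while the paper splits the sum at $\alpha_m,\beta_m$ first and shifts each piece — which amounts to the same computation.
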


\begin{proof} 
When $n+1< \alpha_m$,
$\sum_{i=k_{m+3}-1}^nb(i)=\sum_{i=k_{m}-1}^{n-t_{m-1}}[b(i)+1]
=\sum_{i=k_{m}-1}^{n-t_{m-1}}b(i)+n-k_{m+3}+2.$

When $\alpha_m\leq n+1< \beta_m$, since $k_{m+3}-t_{m-1}=k_m$ and $k_{m}+t_{m-4}=k_{m+1}$,
$$\begin{array}{rl}
&\sum\limits_{i=k_{m+3}-1}^nb(i)
=\sum\limits_{i=k_{m+3}-1}^{k_{m+3}+t_{m-4}-2}b(i)+\sum\limits_{i=k_{m+3}+t_{m-4}-1}^nb(i)\\
=&\sum\limits_{i=k_{m}-1}^{k_{m+1}-2}[b(i)+1]
+\sum\limits_{i=k_{m+1}-1}^{n-t_{m-1}}[b(i)+1]
=\sum\limits_{i=k_{m+1}-1}^{n-t_{m-1}}b(i)+D(m-3)+n-k_{m+3}+2.
\end{array}$$

When $\beta_m\leq n+1$, since $k_{m+3}-t_{m-1}=k_m$, $k_{m}+t_{m-4}=k_{m+1}$ and
$k_{m+1}+t_{m-3}=k_{m+2}$,
$$\begin{array}{rl}
&\sum\limits_{i=k_{m+3}-1}^nb(i)
=\sum\limits_{i=k_{m+3}-1}^{k_{m+3}+t_{m-4}-2}b(i)
+\sum\limits_{i=k_{m+3}+t_{m-4}-1}^{k_{m+3}+t_{m-4}+t_{m-3}-2}b(i)
+\sum\limits_{i=k_{m+3}+t_{m-4}+t_{m-3}-1}^nb(i)\\
=&\sum\limits_{i=k_{m}-1}^{k_{m+1}-2}b(i)
+\sum\limits_{i=k_{m+1}-1}^{k_{m+2}-2}b(i)
+\sum\limits_{i=k_{m+2}-1}^{n-t_{m-1}}b(i)+n-k_{m+3}+2\\
=&\sum\limits_{i=k_{m+2}-1}^{n-t_{m-1}}b(i)+D(m-3)+D(m-2)+n-k_{m+3}+2.
\end{array}$$
Thus the conclusion holds.
\end{proof}

\noindent\emph{Example.} We calculate $\sum_{i=15}^{24}b(i)$.
One method is using Theorem \ref{b},
Since $[b(15),\cdots,b(27)]=[2,3,2,3,3,4,3,3,4,3,4,4,5]$, $\sum_{i=15}^{24}b(i)=30$.
The other method is using Corollary \ref{ct}.
Since $k_{8}\leq n+1< k_{9}$ for $n=24$, $m=5$.
Moreover $\beta_5=k_{8}+t_{1}+t_{2}=22\leq 24+1$, then
$$\begin{array}{rl}
\sum_{i=15}^{24}b(i)
=&\sum_{i=k_{7}-1}^{24-t_{4}}b(i)
+D(2)+D(3)+n-k_{8}+2
=\sum_{i=8}^{11}b(i)
+21=30.
\end{array}$$

\noindent\textbf{Algorithm B.}

Step 1. Find the $m$ such that $k_{m+3}\leq n+1< k_{m+4}$.

Step 2. Calculate $B(k_{m+3}-2)$ by Property \ref{B}; calculate $\sum_{i=k_{m+3}-1}^nb(i)$ by Theorem \ref{b} or by Corollary \ref{ct}.
Then $B(n)=B(k_{m+3}-2)+\sum_{i=k_{m+3}-1}^nb(i)$.

\vspace{0.2cm}

\noindent\emph{Example.} We calculate $B(24)$. Since $k_{8}\leq 24+1< k_{9}$, $m=5$.
By Property \ref{B},
$$\begin{array}{c}
B(k_{8}-2)=B(14)
=\frac{4}{44}(8t_{6}+4t_{5}-2t_{4})
-\frac{1}{44}(9t_{6}-3t_{4})+\frac{3}{4}=31.
\end{array}$$
By Theorem \ref{b} or by Corollary \ref{ct}, $\sum_{i=15}^{24}b(i)=30$.
Thus $B(24)=B(14)+\sum_{i=15}^{24}b(i)=61$.

\vspace{0.2cm}

Now we turn to get explicit expression for $B(t_m)$.

For $m\geq4$. (1) Since $k_{m+3}\leq t_m+1< k_{m+4}$ and $\beta_m=k_{m+3}+t_{m-4}+t_{m-3}\leq t_m+1$,
$$\begin{array}{c}
\sum_{i=k_{m+3}-1}^{t_m}b(i)=\sum_{i=k_{m+2}-1}^{t_m-t_{m-1}}b(i)+D(m-3)+D(m-2)+t_m-k_{m+3}+2.
\end{array}$$
(2) Since $k_{m+2}\leq t_m-t_{m-1}+1< k_{m+3}$ and $\alpha_{m-1}\leq t_m-t_{m-1}+1\leq \beta_{m-1}$,
$$\begin{array}{c}\sum_{i=k_{m+2}-1}^{t_m-t_{m-1}}b(i)
=\sum_{i=k_{m}-1}^{t_m-t_{m-1}-t_{m-2}}b(i)+D(m-4)+t_m-t_{m-1}-k_{m+2}+2.
\end{array}$$

By (1) and (2) above, using $D(m-1)=D(m-4)+D(m-3)+D(m-2)+t_{m-2}$, we have
$$\begin{array}{c}
\sum_{i=k_{m+3}-1}^{t_m}b(i)
=\sum_{i=k_{m}-1}^{t_{m-3}}b(i)+D(m-1)+t_m+t_{m-3}-k_{m+2}-k_{m+3}+4.
\end{array}$$

Since $B(t_m)=\sum_{n=1}^{m-1}D(n)+\sum_{i=k_{m+3}-1}^{t_m}b(i)$
and $B(t_{m-3})=\sum_{n=1}^{m-4}D(n)+\sum_{i=k_{m}-1}^{t_{m-3}}b(i)$,
$$\begin{array}{rl}
&B(t_m)-B(t_{m-3})\\
=&D(m-1)+D(m-2)+D(m-3)+D(m-1)+t_m+t_{m-3}-k_{m+2}-k_{m+3}+4\\
=&D(m)+D(m-1)+t_{m-2}+2t_{m-3}-k_{m+2}-k_{m+3}+4.
\end{array}$$
Using $t_{m-2}-k_{m+3}=-k_{m+2}$, $t_{m-3}-k_{m+2}=-k_{m+1}$, and checking for $m=3$, we have

\begin{property}[]\
$B(t_m)=B(t_{m-3})+D(m)+D(m-1)-2k_{m+1}+4$ for $m\geq3$.

\end{property}

Using the expression of $D(m)$, $k_m=\frac{t_{m-3}+t_{m-5}+1}{2}$ and by induction, we have

\begin{theorem}[]\
$B(t_m)=\tfrac{m}{22}(10t_{m}+5t_{m-1}+3t_{m-2})
+\tfrac{1}{22}(-23t_{m}+12t_{m-1}-5t_{m-2})+m+\frac{3}{2}$ for $m\geq0$.
\end{theorem}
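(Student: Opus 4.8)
The plan is to derive the closed form for $B(t_m)$ from the recursive relation established just above the statement, namely
$$B(t_m)=B(t_{m-3})+D(m)+D(m-1)-2k_{m+1}+4\quad\text{for }m\geq3,$$
using induction on residue classes modulo $3$. First I would verify the three base cases $m=0,1,2$ by direct computation: compute $B(t_0)=B(1)$, $B(t_1)=B(2)$, $B(t_2)=B(4)$ from the initial values of $b(n)$ in Theorem \ref{b}, and check that the proposed formula reproduces them. This anchors the induction, since the recurrence advances the index by $3$ and therefore each residue class $m\equiv 0,1,2\pmod 3$ must be seeded separately.

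For the inductive step, I would assume the formula holds for $m-3$ and substitute the known expressions into the recurrence. The right-hand side requires three ingredients: the inductive expression for $B(t_{m-3})$, the explicit formula for $D(m)+D(m-1)$ obtained from the property giving $D(m)=\tfrac{m}{22}(3t_m+7t_{m-1}+2t_{m-2})+\tfrac{1}{22}(3t_m-3t_{m-1}+4t_{m-2})$, and the value of $k_{m+1}$ converted into Tribonacci numbers via Property \ref{bp1}(1), i.e. $k_{m+1}=\tfrac{t_{m-2}+t_{m-4}+1}{2}$. The goal is to show that adding $D(m)+D(m-1)-2k_{m+1}+4$ to the value of $B(t_{m-3})$ yields exactly the claimed value of $B(t_m)$.

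The main obstacle will be the bookkeeping in the linear-algebra reduction: every term must be re-expressed in the fixed basis $\{t_m,t_{m-1},t_{m-2}\}$ using the defining recurrence $t_j=t_{j-1}+t_{j-2}+t_{j-3}$ repeatedly, so that the inductive hypothesis written in terms of $t_{m-3},t_{m-4},t_{m-5}$ can be lined up against the target written in terms of $t_m,t_{m-1},t_{m-2}$. Concretely, I would rewrite $t_{m-3},t_{m-4},t_{m-5}$ as integer combinations of $t_m,t_{m-1},t_{m-2}$ (inverting the recurrence), push all the $D$- and $k$-terms into the same basis, and then separately match the coefficient of $m$ (the part linear in the index) and the constant-coefficient part. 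The linear-in-$m$ contribution comes solely from the $\tfrac{m}{22}$-type pieces of $B(t_{m-3})$, $D(m)$, and $D(m-1)$, while the constant part absorbs the $-2k_{m+1}+4$ correction and the non-$m$ pieces; verifying that both parts agree is a routine but delicate coefficient comparison that I expect to occupy the bulk of the work.

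The remaining subtlety is the rational constant $+\tfrac32$ and the half-integer coefficients appearing in $k_{m+1}$: one must confirm that the $+4$ and the $-2k_{m+1}=-(t_{m-2}+t_{m-4}+1)$ combine with the inherited constant from $B(t_{m-3})$ to reproduce exactly $m+\tfrac32$, with the stray $\tfrac12$'s cancelling correctly. Since the recurrence is driven purely by the already-proven expressions for $D$ and $k$, no new structural input is needed beyond these substitutions; the proof reduces entirely to an induction whose inductive step is a single verified identity among Tribonacci polynomials, together with the three modular base cases.
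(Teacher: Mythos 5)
Your proposal is correct and follows essentially the same route as the paper: the paper's own (very terse) proof is exactly ``using the expression of $D(m)$, $k_m=\frac{t_{m-3}+t_{m-5}+1}{2}$ and by induction'' applied to the relation $B(t_m)=B(t_{m-3})+D(m)+D(m-1)-2k_{m+1}+4$, which is precisely the induction you describe. Your write-up merely makes explicit what the paper leaves implicit — the three base cases $m=0,1,2$ (which indeed check out: $B(1)=1$, $B(2)=2$, $B(4)=5$) and the coefficient bookkeeping in the basis $\{t_m,t_{m-1},t_{m-2}\}$ — so there is nothing structurally different to compare.
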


\noindent\emph{Example.} $B(24)=B(t_5)=\tfrac{5}{22}(10t_{5}+5t_{4}+3t_{3})
+\tfrac{1}{22}(-23t_{5}+12t_{4}-5t_{3})+5+\frac{3}{2}=61$.

\section{The cylinder structure and chain structure}

As a final remark, we establish the cylinder structures and chain structures of $\mathcal{P}_F$ and $\mathcal{P}_T$ in this section. Using them, we prove some known results.

\vspace{0.4cm}

\noindent\textbf{\large{8.1. The cylinder structure}}

\vspace{0.2cm}

By the expression (1) in Section 3, we see that the set $\mathcal{P}_F$ decomposes into three disjoint cylinder sets
$\langle a\rangle$, $\langle b\rangle$ and $\langle aa\rangle$, which can be illustrated by the following way, see Tab.1. We call it the cylinder structure of $\mathcal{P}_F$.
This means any palindromes can be generated by kernel words.
More concretely, let $\omega$ be a palindrome, then $\omega\in \langle aa\rangle$ if $|\omega|$ is even;
$\omega\in\langle a\rangle$ (resp. $\langle b\rangle$) if $|\omega|$ is odd and the middle letter of $\omega$ is $a$ (resp. $b$).

Tab.1 shows the first several elements of cylinder sets $\langle a\rangle$, $\langle b\rangle$ and $\langle aa\rangle$, where we sign all kernel words with underlines.
We can see that the kernel words are sparse in $\mathcal{P}_F$, this means the palindromes are generated ``high efficient'' from kernel words.

\vspace{0.2cm}

\centerline{Tab.1: The cylinder structure of $\mathcal{P}_F$.}
\vspace{-0.2cm}
\begin{center}
\begin{tabular}{ccc}
\hline
cylinder$\langle a\rangle$&cylinder$\langle b\rangle$&cylinder$\langle aa\rangle$\\\hline
\underline{a}&\underline{b}&\underline{aa}\\
\underline{bab}&aba&baab\\
ababa&\underline{aabaa}&abaaba\\
aababaa&baabaab&\underline{babaabab}\\
baababaab&abaabaaba&ababaababa\\
abaababaaba&babaabaabab&aababaababaa\\
\underline{aabaababaabaa}&ababaabaababa&baababaababaab\\
baabaababaabaab&aababaabaababaa&abaababaababaaba\\
abaabaababaabaaba&baababaabaababaab&aabaababaababaabaa\\
babaabaababaabaabab&abaababaabaababaaba&baabaababaababaabaab\\
ababaabaababaabaababa&\underline{babaababaabaababaabab}&abaabaababaababaabaaba\\
aababaabaababaabaababaa&ababaababaabaababaababa&babaabaababaababaabaabab\\
\vdots&\vdots&\vdots\\\hline
\end{tabular}
\end{center}

By the cylinder structure of $\mathcal{P}_F$, we get some known results immediately.
Let $\mathcal{P}_F(n)$ be all palindromes occurring in $\mathbb{F}$ of length $n$.
For a finite word $\omega=x_1x_2\cdots x_n$, the $i$-th conjugation of $\omega$ is the word $C_i(\omega)=x_{i+1}\cdots x_nx_1\cdots x_i$ where $0\leq i\leq n-1$.

(1) For any $n\geq1$, $\sharp\mathcal{P}_F(n)=2$ if $n$ is odd; $\sharp\mathcal{P}_F(n)=1$ if $n$ is even. See X.Droubay\cite{D1995}.

(2) For any $m\geq-1$, $\mathcal{P}_F(f_m)\cap\{C_i(F_m),0\leq i\leq f_{m}\}=0$ ($m\equiv1\mod~3$) or 1 (otherwise). See both W-.F.Chuan\cite{C1993} and X.Droubay\cite{D1995}.

(3) For any $m\geq-1$, $K_{m}\not\!\prec K_{m+1}$. See Z.-X.Wen and Z.-Y.Wen\cite{WW1994}.

Similarly, we establish the cylinder structure of $\mathcal{P}_T$.
The set $\mathcal{P}_T$ decomposes into four disjoint cylinder sets
$\langle a\rangle$, $\langle b\rangle$, $\langle c\rangle$ and $\langle aa\rangle$.
By the cylinder structure of $\mathcal{P}_T$, we have

\begin{property}[]\
For $m\geq3$, $K_{m-1}\not\!\prec K_{m}$, $K_{m-2}\not\!\prec K_{m}$
and $K_{m-3}\prec K_{m}$ in $\mathbb{T}$.
\end{property}

\vspace{0.2cm}

\noindent\textbf{\large{8.2. The chain structure}}

\vspace{0.2cm}

As a special case of the expression (3) in Section 3, $\langle K_m,1\rangle=\{f_{m+2}-1,\cdots,f_{m+3}-2\}.$
Thus for $m\geq -1$, two integer sets $\langle K_m,1\rangle$ and $\langle K_{m+1},1\rangle$ are consecutive.
Therefore we get a chain $\{\langle K_m,1\rangle\}_{m\geq-1}$ satisfying $\bigcup_{m=-1}^\infty\langle K_m,1\rangle=\mathbb{N}$ below, which we called the chain structure of $\mathcal{P}_F$.

\vspace{0.2cm}

\centerline{Tab.2: The chain structure of $\mathcal{P}_F$.}
\vspace{-0.2cm}
\small
\begin{center}
\begin{tabular}{l|l|l|l|l|l|l|l|l}%
\hline
$\langle a,1\rangle$&$\langle b,1\rangle$&$\langle aa,1\rangle$&$\langle K_2,1\rangle$&$\langle K_3,1\rangle$&$\langle K_4,1\rangle$&$\langle K_5,1\rangle$&$\langle K_6,1\rangle$&$\ldots$\\
$\{1\}$&$\{2,3\}$&$\{4,5,6\}$&$\{7,\cdots,11\}$&$\{12,\cdots,19\}$
&$\{20,\cdots,32\}$&$\{33,\cdots,53\}$&$\{54,\cdots,87\}$&\\\hline
\end{tabular}
\end{center}
\normalsize

Similarly, we establish the chain structure of $\mathcal{P}_T$.
By the chain structure of $\mathcal{P}_F$ (resp. $\mathcal{P}_T$), the number of distinct palindromes in $\mathbb{F}[1,n]$ (resp. $\mathbb{T}[1,n]$) is $n$ (without empty word) for $n\geq1$.
Since $|\mathbb{F}[1,n]|=n$, this results are consistent with ``both $\mathbb{F}$ and $\mathbb{T}$ are rich words".

\vspace{0.5cm}

\noindent\textbf{\Large{Acknowledgments}}

\vspace{0.4cm}

The research is supported by the Grant NSFC No.11431007, No.11271223 and No.11371210.

\end{CJK*}
\end{document}